\documentclass{amsart}
\usepackage{amsfonts}
\usepackage{amsmath,amssymb}	
\usepackage{amsthm}
\usepackage{amscd}
\usepackage{graphics}
\usepackage{graphicx}

{
\theoremstyle{definition}
\newtheorem{Def}{Definition}
\newtheorem{Ex}{Example}
\newtheorem{Rem}{Remark}
\newtheorem{Prob}{Problem}
}

\newtheorem{Cor}{Corollary}
\newtheorem{Prop}{Proposition}
\newtheorem{Thm}{Theorem}

\newtheorem{Lem}{Lemma}

\begin{document}
\title[Lifts of spherical Morse functions]{Lifts of spherical Morse functions}

\author{Naoki Kitazawa}
\keywords{Singularities of differentiable maps; generic maps, lifts of smooth maps. Differential topology.}
\subjclass[2010]{Primary~57R45. Secondary~57N15.}
\address{Institute of Mathematics for Industry, Kyushu University, 744 Motooka, Nishi-ku Fukuoka 819-0395, Japan}
\email{n-kitazawa.imi@kyushu-u.ac.jp}
\maketitle
\begin{abstract}
In studies of smooth maps with good differential topological properties such as immersions and embeddings, whose codimensions are positive, and Morse functions and their higher dimensional versions including {\it fold} maps and so-called {\it stable} maps whose codimensions are 0 or minus, and application to algebraic and differential topology of manifolds, or the theory of global singularity, liftings or disingularizations
 of maps in specific classes are fundamental and important problems. A {\it lift} of a smooth map into a Euclidean space is a smooth map into a higher dimensional Euclidean space such that the map obtained as the composition with the canonical projection is the original map. 

In this paper, as a new problem of constructing lifts, we consider Morse functions such that inverse images of regular values are disjoint unions of spheres or {\it spherical} Morse functions defined and systematically studied by Saeki and Suzuoka in 2000s, which are regarded as generalizations of Morse functions with just two singular
 points on homotopy spheres, and lift them to immersions, embeddings and stable {\it special generic maps}, regarded as higher dimensional versions of Morse functions with just two singular points on homotopy spheres. Note that constructing lifts of maps to special generic maps or more general generic smooth maps which may have singular points, is a new work.   
 
\end{abstract}

\section{Introduction.}
\label{sec:1}
Smooth maps with good differential topological properties such as immersions and embeddings and Morse functions and their higher dimensional versions including 
{\it fold} maps and so-called {\it stable} maps are important objects and tools in the studies of the global singularity theory, or studies of global differential topological properties of smooth manifolds.

Let $a>\geq b$ be positive integers and denote the canonical projection

$$(x_1, \cdots,x_b,\cdots,x_a) \mapsto (x_1, \cdots,x_b)$$

by ${\pi}_{a,b}:{\mathbb{R}}^a \rightarrow {\mathbb{R}}^b$. 
Let $m$ and $n$ be positive integers, $M$ be a smooth closed manifold of dimension $m$ and let $f:M \rightarrow {\mathbb{R}}^n$ be a smooth map.  In this paper, we consider
 the following fundamental and important problem.

\begin{Prob}
For a positive integer $k>0$, does there exist a smooth map $f_k$ in a fixed class satisfying $f={\pi}_{n+k,n} \circ f_k$ ?
\end{Prob}

This is a so-called lifting problem of a smooth map and we call $f_k$ a {\it lift} of $f$. We also say that $f$ is {\it lifted} to $f_k$ and that we lift $f$ to $f_k$. There have been shown various explicit related results as the following since 1950--60s. 

\begin{itemize}
\item Plane curves can be lifted to classical knots in the $3$-dimensional space.
\item Morse functions and stable
 maps of $2$ or $3$-dimensional manifolds into the plane are lifted to immersions or embeddings into appropriate dimensional spaces. See \cite{haefliger}, \cite{levine}, \cite{yamamoto2} and \cite{yamamoto3} for example.
\item Generic maps between equi-dimensional manifolds are lifted to immersions or embeddings into one-dimensional higher spaces under suitable conditions. See \cite{saito} and see also \cite{blankcurley} for example.
\item Various {\it special generic maps}, which are regarded as higher dimensional versions of Morse functions with just two singular points and reviewed in section \ref{sec:2}, are lifted to immersions or embeddings of appropriate dimensions (\cite{saekitakase} and see also \cite{nishioka2}).
\end{itemize}
 
Note that in these studies, maps are lifted to immersions or embeddings and in the present paper, we lift the maps
 to maps whose codimensions are not always positive. More explicitly, we lift Morse functions such that inverse images of regular
 values are disjoint unions of spheres, which are regarded as generalizations of special generic functions from the viewpoint of the fact that manifolds appearing as inverse images of regular values are disjoint unions of spheres, to not only immersions and embeddings but also special generic maps, 

We present the contents of this paper. In the next section, we explain stable maps slightly and review {\it fold} maps and {\it special generic} maps. As main objects of the paper, we also
 define classes of fold maps and stable maps such that the inverse image
 of each regular value is a disjoint union of spheres, which are regarded as generalizations of special generic maps and have been systematically studied
 in \cite{saekisuzuoka} for example. 
Next, we show several theorems on lifts of Morse functions belonging to the class defined before, starting from preliminaries for the proofs. 
Most of theorems are on constructing lifts to immersions, embeddings and special generic maps. As fundamental and new-type studies, we also study about classes of maps obtained
 as the compositions of obtained lifts and canonical projections to spaces whose dimensions are larger than those of the target spaces of the original maps. For example, we study about if a composition is special generic or not.

Throughout this paper, maps, maps between manifolds including homotopies and isotopies, bundles whose bases spaces and fibers are manifolds are smooth and of class $C^{\infty}$ unless otherwise stated. However, to emphasize that we are discussing in the smooth category, we denote "smoothly isotopic" for exampke. We call the set of all singular points of a smooth map the {\it singular set}, the image of the singular set the {\it singular value set}, the complement of the singular value set the {\it regular value set}.

As an important terminology, a map between topological spaces is said to be {\it proper} if the inverse image of any compact subspace in the target space by the map is compact. 

\section{Morse functions, fold maps, special generic maps and stable maps.}
\label{sec:2}
{\it Stable} maps are essential smooth maps in higher dimensional versions
 of the theory of Morse functions. We introduce the definition and for this, we need to introduce that two smooth maps $c_1:X_1 \rightarrow Y_1$ and $c_2:X_2 \rightarrow Y_2$ are said to be {\it $C^{\infty}$ equivalent} if there exists a pair $({\phi}_X:X_1 \rightarrow X_2,{\phi}_Y:Y_1 \rightarrow Y_2)$ of diffeomorphisms satisfying the relation ${\phi}_Y \circ c_1=c_2 \circ {\phi}_X$. Two smooth maps are said to be {\it $C^{0}$ equivalent} if in the relation, two diffeomorphisms are homeomorphisms.

A {\it stable} map is a smooth map such that maps in an open neighborhood of the map in the
 Whitney $C^{\infty}$ topology and the original map are $C^{\infty}$ equivalent.
As simplest examples, Morse functions, whose singular sets are discrete, and their higher dimensional versions, {\it fold maps}, are stable if and only if their restrictions to the singular sets, which are smooth closed submanifolds and the restrictions to which are smooth
 immersions, are transversal in the case where we only consider proper maps: note that at distinct singular points of stable Morse functions, the values are distinct. Stable Morse functions exist densely and stable maps exist densely on a closed
 manifold if the pair of dimensions of the source and the target manifolds is {\it nice}. Studying about existence of (stable) fold maps is a fundamental, difficult and interesting problem (\cite{eliashberg}, \cite{eliashberg2} etc.).
 See also \cite{golubitskyguillemin} for fundamental facts on stable maps and such theory.  

We introduce the definition and a fundamental property of a {\it fold} map (see also \cite{saeki} for example for introductory facts and advanced studies on fold maps). 

\begin{Def}
\label{def:1}
A smooth map is said to be a {\it fold} map if at each singular point $p$ it is
 of the form $$(x_1,\cdots,x_m) \mapsto (x_1,\cdots,x_{n-1},\sum_{k=n}^{m-i(p)}{x_k}^2-\sum_{k=m-i(p)+1}^{m}{x_k}^2)$$ for some
     integers $m,n,i(p)$ satisfying $m \geq n \geq 1$ and $0 \leq i(p) \leq \frac{m-n+1}{2}$.
 A singular point of a smooth map is said to be a {\it fold} point if at the point the function is such a form. 
\end{Def}

\begin{Prop}
\label{prop:1}
In Definition \ref{def:1}, the integer $i(p)$ is taken as a non-negative integer not larger than $\frac{m-n+1}{2}$ uniquely.
 We call $i(p)$ the {\it index} of $p$. The
 set of all singular points of an index of a fold map is a smooth closed submanifold of dimension $n-1$ and the
     restriction of a fold map to the singular set is an immersion as introduced before.  
\end{Prop}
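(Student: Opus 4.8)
The plan is to read everything off the local normal form of Definition~\ref{def:1} together with one coordinate‑free second‑order invariant attached to a fold point. Fix a singular point $p$ and write $f$ near $p$ in the given form $f(x)=(x_1,\dots,x_{n-1},q(x_n,\dots,x_m))$, where $q$ is a nondegenerate quadratic form in the $m-n+1$ variables $x_n,\dots,x_m$ with exactly $i(p)$ negative squares. A direct look at the Jacobian matrix shows that its first $n-1$ rows are everywhere linearly independent while its last row vanishes precisely where $\nabla q=0$; by nondegeneracy of $q$ this happens exactly on $\{x_n=\cdots=x_m=0\}$. Hence in these coordinates the singular set of $f$ near $p$ is the coordinate subspace $\{x_n=\cdots=x_m=0\}$, which is a submanifold chart of dimension $n-1$, and every singular point in this chart again carries the very same quadratic form $q$ in its normal directions; so on each such chart the singular set is a smooth $(n-1)$‑dimensional submanifold and the index is constant along it.

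First I would settle the uniqueness of $i(p)$, which is the only genuinely nontrivial point. At the singular point $p$ the differential $df_p$ has rank $n-1$, so it has an $(m-n+1)$‑dimensional kernel $K_p\subseteq T_pM$ and a $1$‑dimensional cokernel $C_p=T_{f(p)}\mathbb{R}^n/\mathrm{im}\,df_p$, and there is a well‑defined symmetric bilinear \emph{intrinsic second derivative} $d^2f_p\colon K_p\times K_p\to C_p$, namely the second‑order part of $f$ along directions killed by $df_p$, read modulo $\mathrm{im}\,df_p$; it depends only on $(f,p)$. Computing it in the normal form, $d^2f_p$ is, under any identification $C_p\cong\mathbb{R}$, the polarization of $q$, whose signature is $(m-n+1-i(p),\,i(p))$. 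Replacing one identification $C_p\cong\mathbb{R}$ by another only rescales $d^2f_p$ by a nonzero real; when that real is negative the signature is reversed. Consequently the \emph{unordered} pair $\{\,i(p),\ m-n+1-i(p)\,\}$ is an invariant of $(f,p)$, and since exactly one of its two elements lies in $[0,\tfrac{m-n+1}{2}]$ (the two coincide only when $m-n+1$ is even and $i(p)=\tfrac{m-n+1}{2}$), the constraint $0\le i(p)\le\tfrac{m-n+1}{2}$ in Definition~\ref{def:1} pins $i(p)$ down uniquely. This legitimizes calling $i(p)$ the index.

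It remains to assemble the global assertions, where I expect no obstacle beyond bookkeeping of dimensions and the compactness of $M$. The singular set $S(f)$ is closed in $M$ since its complement, the regular value set's preimage, is open, and by the first paragraph it is a smooth submanifold of dimension $n-1$; as $M$ is closed, $S(f)$ is a closed submanifold. The index function $p\mapsto i(p)$ is locally constant on $S(f)$, being constant on each normal‑form chart, so for each fixed $i$ the set $S_i(f)$ of index‑$i$ fold points is both open and closed in $S(f)$; hence $S_i(f)$ is itself a closed (possibly empty) smooth submanifold of dimension $n-1$. Finally, in the normal‑form coordinates the restriction of $f$ to $S(f)$ is $(x_1,\dots,x_{n-1})\mapsto(x_1,\dots,x_{n-1},0)$, which is an immersion (in fact a local embedding); therefore $f|_{S(f)}$, and so each $f|_{S_i(f)}$, is an immersion, which completes the proof.
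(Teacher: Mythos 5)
Your argument is correct, and it is essentially the standard one: the paper states Proposition~\ref{prop:1} as a known fact with no proof of its own (it is the classical statement found in \cite{golubitskyguillemin} and \cite{saeki}), so there is nothing in the text to diverge from. The key step you supply --- that the intrinsic second derivative $d^2f_p\colon K_p\times K_p\to C_p$ on the kernel of $df_p$ with values in the one-dimensional cokernel is coordinate-free, so that only the unordered pair $\{i(p),\,m-n+1-i(p)\}$ is invariant and the normalization $0\le i(p)\le\frac{m-n+1}{2}$ then fixes $i(p)$ --- is exactly the right way to make the ``uniqueness of the index'' precise, and the chartwise computation of the singular set, the local constancy of the index, and the immersivity of $f|_{S(f)}$ are all handled correctly. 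One cosmetic slip: the complement of $S(f)$ is the set of \emph{regular points} of $f$ (open by lower semicontinuity of the rank), not the preimage of the regular value set, which is in general a smaller open set; the conclusion that $S(f)$ is closed is unaffected.
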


\begin{Def}
\label{def:2}
A fold map such that the indices of all singular points are $0$ is said to be {\it special generic}.
\end{Def}

Note that a fold map is at a singular point, locally regarded as a product of a Morse function with just one singular point in the interior and the identity map on an open ball. A special generic map is at a singular point locally regarded as a product of a height function of a standard closed disc or a unit disc and the identity map on an open ball. 

\begin{Def}
\label{def:3}
A proper stable (fold) map from a closed or open manifold of dimension $m$ into ${\mathbb{R}}^n$ satisfying $m \geq 1$ is said to be {\it spherical} ({\it standard-spherical}) if the inverse image
 of each regular value is a disjoint union of (resp. standard) spheres (or points).
For a spherical fold map, it is said to be {\it normal} if the connected component containing a singular point of the inverse image of a
 small interval intersecting with the singular value set at once and in its interior is either of the following.
\begin{enumerate}
\item The ($m-n+1$)-dimensional standard closed disc.
\item A manifold PL homeomorphic to an ($m-n+1$)-dimensional compact manifold obtained by removing the interior of three disjoint ($m-n+1$)-dimensional smoothly embedded closed discs from the ($m-n+1$)-dimensional standard sphere.
\end{enumerate}  
\end{Def}

It easily follows that the indices of singular points are $0$ or $1$.

\begin{Ex}
\label{ex:1}
A Morse function with just two singular points on a closed manifold, characterizing homotopy spheres topologically (except $4$-dimensional homotopy spheres being not the standard $4$-sphere, or exotic $4$-dimensional spheres, which are not discovered), is a special generic map.
It is also a normal standard-spherical fold map.
A canonical projection of a unit sphere is also a stable special generic map. It is also a normal standard-spherical fold map.
\end{Ex}

For systematic studies of spherical maps, see \cite{saekisuzuoka} for example.
In this paper, about spherical maps, we only study fold maps and do not study about spherical fold maps which are not normal.

\section{Main results and essential tools and facts for the proofs.}
\label{sec:3}
\subsection{Reeb spaces.}
\begin{Def}
\label{def:4}
Let $X$ and $Y$ be topological spaces. For points $p_1, p_2 \in X$ and for a continuous map $c:X \rightarrow Y$, we define as $p_1 {\sim}_c p_2$ if and only if $p_1$ and $p_2$ are in
 a connected component of $c^{-1}(p)$ for some $p \in Y$. The relation is an equivalence relation.
We call the quotient space $W_c:=X/{\sim}_c$ the {\it Reeb space} of $c$.
\end{Def}
\begin{Ex}
\label{ex:2}
\begin{enumerate}
\item
\label{ex:2.1}
 For a stable Morse function, the Reeb space is a graph.
\item
\label{ex:2.2}
 For a proper special generic map, the Reeb space is regarded as an immersed compact manifold with a non-empty boundary whose dimension is same as that of the target manifold. 

For a pair $(m.n)$ of integers satisfying the relation $m>n$ and a closed manifold $M$ of dimension $m$, $M$ admits a special generic map into ${\mathbb{R}}^n$ if and only if the following hold.

\begin{itemize} 
\item There exists a compact smooth manifold $P$ s.t. $\partial P \neq \emptyset$ which we can immerse into ${\mathbb{R}}^n$.
\item $M$ is obtained by gluing the following two manifolds by a bundle isomorphism between the $S^{m-n}$-bundles over the boundary $\partial P$.
\begin{itemize}
\item The total space of an $S^{m-n}$-bundle over $P$.
\item The total space of a linear $D^{m-n+1}$-bundle over $\partial P$.
\end{itemize} 
\end{itemize}

See FIGURE \ref{fig:0-1}. $P$ is regarded as the Reeb space $W_f$ of a special generic map $f$ and $\partial P$ is regarded as the singular value set. The  linear $D^{m-n+1}$-bundle over $\partial P$ is regarded as a normal bundle of the singular set, discussed in Theorem \ref{thm:8} and later.
\begin{figure}
\includegraphics[width=40mm]{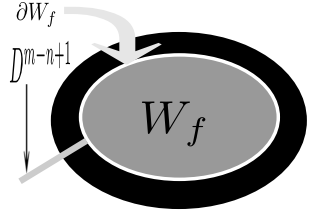}
\caption{The image of a special generic map.}
\label{fig:0-1}
\end{figure}

The Reeb space of a special generic map on a closed manifold is contractible
 if and only if the source manifold is a homotopy sphere. It is also a fundamental and important property that the quotient map from the source manifold onto the
 Reeb space induces isomorphisms on homology groups whose degrees are not larger than the difference of the dimension of the source manifold and that of the target one. For precise studies on special generic maps on closed manifolds, see \cite{saeki2} for example.  
\item
\label{ex:2.3}
 The Reeb spaces of proper stable (fold) maps are polyhedra whose dimensions and those of the target manifolds coincide. For a normal spherical fold map on a closed manifold, it is also a fundamental and important property that the quotient map from the source manifold onto the
 Reeb space induces isomorphisms on homology groups whose degrees are smaller than the difference of the dimension of the source manifold and that of the target one. See \cite{saekisuzuoka} and see also \cite{kitazawa} and \cite{kitazawa2} for example. Last, we present a height function on $S^m$ (or more generally a Morse function with just two singular points) and a height function on $S^1 \times S^{m-1}$ ($m \geq 2$) as simplest examples of normal spherical Morse functions and the Reeb spaces in FIGURE \ref{fig:0-2}.

\begin{figure}
\includegraphics[width=40mm]{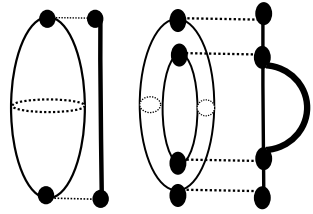}
\caption{A height function on $S^m$ (or more generally a Morse function with just two singular points) and a height function on $S^1 \times S^{m-1}$ ($m \geq 2$) as simplest examples of normal spherical Morse functions and the Reeb spaces.}
\label{fig:0-2}
\end{figure}

\item
\label{ex:2.4}
 For a proper stable map or more generally a proper {\it Thom}  map, the Reeb space is a polyhedron (\cite{shiota}).
\end{enumerate}
\end{Ex}
\subsection{Facts on immersions and embeddings of standard spheres.}
\begin{Prop}
\label{prop:2}
Let $k$ be a non-negative integer. For the canonical embedding $i_0:S^k \subset \mathbb{R}^{k+1}$ where $S^k$ is the unit sphere, for any orientation-preserving diffeomorphism $\phi$, $i_0$ and $i_0 \circ \phi$ are regularly homotopic. if $k=0,2,6$ holds, then for any diffeomorphism $\phi$, they are regularly homotopic.
\end{Prop}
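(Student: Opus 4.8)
The plan is to deduce the statement from the Hirsch--Smale classification of immersions of spheres: the set of regular homotopy classes of immersions $S^k \to \mathbb{R}^{k+1}$ is in natural bijection with $\pi_k(V_{k+1,k})$, where $V_{k+1,k}$ is the Stiefel manifold of orthonormal $k$-frames in $\mathbb{R}^{k+1}$, a manifold diffeomorphic to $SO(k+1)$. Under this dictionary the proposition becomes the assertion that $i_0$ and $i_0 \circ \phi$ determine the same element of $\pi_k(SO(k+1))$, their ``Smale invariant''. Since $i_0 \circ \phi_t$ is a regular homotopy (through embeddings) whenever $\phi_t$ is an isotopy, this element depends on $\phi$ only through its class in $\pi_0 \mathrm{Diff}(S^k)$.

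For an orientation-preserving $\phi$ I would reduce to a local model. Using standard facts on diffeomorphism groups of spheres, $\phi$ can be isotoped so that it fixes a chosen point $q$ and has orthogonal derivative there; composing with an element $A \in SO(k+1)$ fixing $q$ to cancel that derivative and then linearising near $q$, one sees that $\phi$ is isotopic to $A^{-1} \circ \psi$ with $A^{-1} \in SO(k+1)$ and $\psi$ a diffeomorphism supported in a small closed disc $D \subsetneq S^k$ disjoint from $q$. Because $SO(k+1)$ is connected, $i_0 \circ A^{-1} = A^{-1} \circ i_0$ is ambient isotopic, hence regularly homotopic, to $i_0$, so $i_0 \circ \phi$ is regularly homotopic to $i_0 \circ \psi$; thus one is left with $\psi$ supported in $D$. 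Here $i_0 \circ \psi$ agrees with $i_0$ outside $D$, and their Smale invariants are seen to differ by a class in $\pi_k(SO(k+1))$ which is supported in $D$ and built from the derivative of $\psi$. The task then is to show that this class vanishes, equivalently that the map $\pi_0 \mathrm{Diff}^{+}(S^k) \to \pi_k(SO(k+1))$ carrying the class of $\phi$ to that difference is trivial.

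This last point is where the real content sits, and I expect it to be the main obstacle: $\psi$ need not be isotopic to the identity in $\mathrm{Diff}^{+}(S^k)$ --- its class there may run over the group of homotopy $(k+1)$-spheres --- so one must genuinely use the extra flexibility of regular homotopy, as opposed to isotopy, to cancel its $1$-jet. I would either invoke the corresponding computation from the literature on immersions of spheres or build an explicit regular homotopy inside a tubular neighbourhood of the round sphere. That the orientation hypothesis cannot be dropped in general is already visible for $k=1$: precomposing $i_0$ with a reflection of $S^1$ reverses the turning number, so $i_0$ and $i_0 \circ \phi$ are then not regularly homotopic.

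For the exceptional values $k = 0, 2, 6$ none of the above is needed, since there $\pi_k(SO(k+1))$ is trivial: $V_{1,0}$ is a point; $\pi_2(SO(3)) = 0$ because the universal cover of $SO(3)$ is $S^3$; and $\pi_6(SO(7)) = 0$ because the fibration $SO(7) \to SO(8) \to S^7$ admits a section (as $S^7$ is parallelizable), while $\pi_6(SO(8)) = \pi_6(SO) = 0$ by Bott periodicity. Hence in these dimensions there is a unique regular homotopy class of immersions $S^k \to \mathbb{R}^{k+1}$ --- for $k = 2$ this is Smale's eversion theorem for the $2$-sphere --- so $i_0$ and $i_0 \circ \phi$ are regularly homotopic for every diffeomorphism $\phi$, whether or not it preserves orientation.
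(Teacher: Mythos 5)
Your argument for the exceptional dimensions $k=0,2,6$ is complete and correct: the Hirsch--Smale bijection with $\pi_k(V_{k+1,k})\cong\pi_k(SO(k+1))$ together with the computations $\pi_0(V_{1,0})=0$, $\pi_2(SO(3))=0$ and $\pi_6(SO(7))=0$ gives a unique regular homotopy class of immersions $S^k\to\mathbb{R}^{k+1}$ in these dimensions, which is stronger than what is asked. This is a genuinely different route from the paper, which instead quotes Smale's eversion results to write an arbitrary orientation-reversing $\phi$ as (orientation-reversing with controlled behaviour)$\circ$(orientation-preserving) and so reduces the second claim to the first; your version is cleaner in that it does not depend on the first claim at all, while the paper's version makes the logical dependence on the orientation-preserving case explicit.

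For the main, orientation-preserving statement, however, there is a genuine gap. Your reduction --- isotoping $\phi$ to $A^{-1}\circ\psi$ with $A\in SO(k+1)$ and $\psi$ supported in a disc, and absorbing $A$ by an ambient rotation --- is sound, but it only rephrases the proposition as the vanishing of the map $\pi_0\mathrm{Diff}^{+}(S^k)\to\pi_k(SO(k+1))$ sending $[\phi]$ to the difference of Smale invariants of $i_0$ and $i_0\circ\phi$. That vanishing \emph{is} the content of the proposition (it is Kaiser's theorem, the reference the paper leans on via \cite{kaiser} and \cite{saekitakase}), and it is not formal: as you yourself note, $\psi$ may represent a nontrivial exotic sphere in $\pi_0\mathrm{Diff}^{+}(S^k)\cong\Theta_{k+1}$, so one cannot cancel its $1$-jet by an isotopy, and the needed input is a nontrivial fact about homotopy spheres (essentially their stable parallelizability, pushed into the unstable group $\pi_k(SO(k+1))$). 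Saying you ``would either invoke the corresponding computation from the literature or build an explicit regular homotopy'' leaves the entire substance of the first claim unestablished. To be fair, the paper's own proof of this half is also only a citation; but a self-contained proof would have to either carry out that computation or name the precise result of Kaiser being used, and your proposal does neither.
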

\begin{proof}
The former is presented in \cite{saekitakase} with an explanation of the basic idea \cite{kaiser} and the latter follows from the fact that we can reverse every unit sphere
 of dimension $0$,  $2$ or $6$ by a regular homotopy in the outer Euclidean space of dimension larger than the sphere by $1$ (\cite{smale}):
 considering the operation of this reversing, we
 represent the original orientation reversing diffeomorphism $\phi$ as a composition of a corresponding appropriate orientation
 reversing diffeomorphism and an orientation preserving one and this completes the proof.
\end{proof}

For embeddings, we can know the following presented in \cite{saekitakase} for example.

\begin{Prop}
\label{prop:3}
Let $k$ be a non-negative integer not $4$. For the caonical embedding $i_0:S^k \subset \mathbb{R}^{k+1}$ where $S^k$ is the unit sphere, for an orientation preserving diffeomorphism $\phi$ on $S^k$, $i_0$ and $i_0 \circ \phi$ are isotopic if and only if $\phi$ is isotopic to the identity.
Especially, for $0 \leq k \leq 5$ with $k \neq 4$, an orientation preserving diffeomorphism $\phi$ always satisfies the condition. Moreover, even if $k=4$ holds, $i_0$ and $i_0 \circ \phi$ are always isotopic.
\end{Prop}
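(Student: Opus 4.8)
\emph{Plan.} The ``if'' direction is immediate and needs no hypothesis on $k$: if $\{{\phi}_t\}_{t\in[0,1]}$ is a smooth isotopy of $S^k$ with ${\phi}_0=\mathrm{id}$ and ${\phi}_1=\phi$, then $\{i_0\circ{\phi}_t\}$ is a smooth isotopy of embeddings from $i_0$ to $i_0\circ\phi$. All the content is in the converse, and I would prove it by first reducing the two conditions in the statement to a single auxiliary one, valid for every $k\ge 0$:
\begin{equation*}
i_0 \text{ and } i_0\circ\phi \text{ are isotopic} \iff \phi \text{ extends to a diffeomorphism of } D^{k+1}.
\end{equation*}
For ``$\Rightarrow$'', apply the isotopy extension theorem to an isotopy of embeddings $S^k\hookrightarrow{\mathbb R}^{k+1}$ joining $i_0$ and $i_0\circ\phi$; since $S^k$ is compact this yields an ambient isotopy $\{{\Phi}_t\}$ of ${\mathbb R}^{k+1}$ with ${\Phi}_0=\mathrm{id}$ and ${\Phi}_1|_{S^k}=\phi$, and since ${\Phi}_1$ is a self-diffeomorphism of ${\mathbb R}^{k+1}$ it carries the bounded component of the complement of the unit sphere onto itself, hence restricts to a diffeomorphism of $D^{k+1}$ extending $\phi$. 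For ``$\Leftarrow$'', given $\bar\phi\in\mathrm{Diff}(D^{k+1})$ with $\bar\phi|_{S^k}=\phi$, first isotope $\bar\phi$ so that it is radial near $\partial D^{k+1}$ (a standard collar-straightening), then extend it over ${\mathbb R}^{k+1}$ by the rule $rx\mapsto r\phi(x)$ for $r\ge 1$, $x\in S^k$; the resulting $\widetilde\phi$ lies in the path-connected group $\mathrm{Diff}^{+}({\mathbb R}^{k+1})$ (translate to fix the origin and linearise), so $\widetilde\phi$ is isotopic to $\mathrm{id}$, and restricting that isotopy to $S^k$ gives the required isotopy of embeddings.

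It then remains to decide, for a given $k$, which orientation preserving $\phi\in\mathrm{Diff}^{+}(S^k)$ extend over $D^{k+1}$, which I would do through the twisted sphere ${\Sigma}_{\phi}:=D^{k+1}\cup_{\phi}D^{k+1}$, a homotopy $(k+1)$-sphere. Using uniqueness up to ambient isotopy of smoothly embedded discs (Palais, Cerf), one checks that ${\Sigma}_{\phi}$ is diffeomorphic to the standard $S^{k+1}$ precisely when $\phi$ extends over $D^{k+1}$, and that $\phi\mapsto{\Sigma}_{\phi}$ induces a homomorphism $\pi_0\mathrm{Diff}^{+}(S^k)\to{\Theta}_{k+1}$ whose kernel is exactly the set of isotopy classes extending over $D^{k+1}$. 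For $k\ge 5$ this homomorphism is an isomorphism by Cerf's pseudoisotopy theorem, so its kernel is trivial and ``$\phi$ extends over $D^{k+1}$'' is equivalent to ``$\phi$ is isotopic to $\mathrm{id}$'', which together with the auxiliary equivalence is the asserted ``if and only if''. For $0\le k\le 5$ with $k\neq 4$ the whole group $\pi_0\mathrm{Diff}^{+}(S^k)$ is trivial ($k=0$ is trivial, $k=1,2,3$ are the classical theorems of Smale and Cerf, and $k=5$ is known, using ${\Theta}_6=0$), so every such $\phi$ is isotopic to $\mathrm{id}$ --- this is the ``especially'' clause --- and then the equivalence of the Proposition holds with both sides always true. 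Finally, for $k=4$ one has ${\Theta}_5=0$, so ${\Sigma}_{\phi}$ is always the standard $S^5$ and therefore \emph{every} orientation preserving diffeomorphism of $S^4$ extends over $D^5$; by the auxiliary equivalence, $i_0$ and $i_0\circ\phi$ are then always isotopic, which is the last assertion. (This also explains the exclusion of $k=4$ from the main equivalence: there the kernel above is all of $\pi_0\mathrm{Diff}^{+}(S^4)$, which is not known to be trivial, so ``extends over $D^5$'' cannot be promoted to ``isotopic to $\mathrm{id}$''.)

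The crux --- and the only genuinely hard input --- is the identification of $\pi_0\mathrm{Diff}^{+}(S^k)$ with ${\Theta}_{k+1}$ via the twisted-sphere map for $k\ge 5$ (Cerf), together with the classical facts ${\Theta}_5={\Theta}_6=0$ and the triviality of $\pi_0\mathrm{Diff}^{+}(S^k)$ for small $k$; the present paper presumably just quotes these. Everything else --- the isotopy extension theorem, the inside/outside argument, the radial extension and linearisation of diffeomorphisms of ${\mathbb R}^{k+1}$, and the disc-uniqueness feeding the twisted-sphere correspondence --- is routine.
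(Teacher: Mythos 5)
Your proof is correct, but be aware that the paper does not actually prove this proposition: it is quoted directly from \cite{saekitakase}, so there is no in-paper argument to compare yours against. What you have written is a faithful reconstruction of the standard proof underlying that citation. The two halves of your argument are sound: the equivalence of ``$i_0$ and $i_0\circ\phi$ are isotopic'' with ``$\phi$ extends to a diffeomorphism of $D^{k+1}$'' (isotopy extension in one direction, radial extension plus the linearisation of $\mathrm{Diff}^{+}({\mathbb{R}}^{k+1})$ in the other), and the identification, via the Palais--Cerf disc theorem, of the extendable diffeomorphisms with the kernel of the twisted-sphere homomorphism $\pi_0 \mathrm{Diff}^{+}(S^k) \rightarrow \Theta_{k+1}$, which Cerf's pseudoisotopy theorem shows is trivial for $k \geq 5$; together with the triviality of $\pi_0 \mathrm{Diff}^{+}(S^k)$ for $k \leq 3$ and $k=5$ and with $\Theta_5 = 0$ for the $k=4$ clause, this yields all three assertions. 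Your closing observation also correctly explains why $k=4$ must be excluded from the ``if and only if'': every orientation preserving diffeomorphism of $S^4$ extends over $D^5$, yet it is not known to be isotopic to the identity. The only sense in which your write-up is incomplete is that the deep inputs (Cerf's pseudoisotopy theorem, the disc theorem, $\Theta_5=\Theta_6=0$, Smale and Cerf in low dimensions) are quoted rather than proved --- but that is precisely the level of detail at which the paper itself operates, since it cites the whole statement.
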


\subsection{Cerf's theory, a study by Wrazidlo and basic tools.}
We introduce a fundamental result based on Cerf's theory \cite{cerf}, which is used in \cite{saeki2} for example.
\begin{Prop}
\label{prop:4}
Let $f:M \rightarrow \mathbb{R}$ a special generic function on a homotopy sphere of dimension not $5$. 
We can smoothly isotope any orientation preserving diffeomorphism $\phi$ on $M$ to ${\phi}_0$ so that $f \circ {\phi}_0=f$ holds.
\end{Prop}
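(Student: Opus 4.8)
The plan is to reduce the statement to the path‑connectedness of the space $\mathcal{S}$ of special generic functions on $M$, and then to integrate a path in $\mathcal{S}$ into an isotopy of $\phi$.

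Set $g:=f\circ\phi$. Since $\phi$ is a diffeomorphism, $g$ is again a special generic function on $M$, hence $g\in\mathcal{S}$; recall that every element of $\mathcal{S}$ is a Morse function with exactly two critical points, each regular level being an $(m-1)$-sphere, and that $M$ is recovered from the $m$-disk lying just above the minimum value by attaching a single $m$-handle. Assume for the moment that $\mathcal{S}$ is path-connected, and choose a smooth path $\{f_t\}_{t\in[0,1]}$ in $\mathcal{S}$ with $f_0=f$ and $f_1=g$; we may take the two critical points of $f_t$, with critical values $a(t)<b(t)$, to depend smoothly on $t$. A parametrized relative Morse lemma near these two critical points, together with the isotopy extension theorem applied to the family of regular level sets (all diffeomorphic $(m-1)$-spheres), yields smooth families $\psi_t\in\mathrm{Diff}^{+}(M)$ and $\rho_t\in\mathrm{Diff}^{+}(\mathbb{R})$ with $\psi_0=\mathrm{id}_M$, $\rho_0=\mathrm{id}_{\mathbb{R}}$ and $f_t=\rho_t\circ f\circ\psi_t^{-1}$ for all $t$ (equivalently, the orbit of $f$ under the action $(\psi,\rho)\mapsto\rho\circ f\circ\psi^{-1}$ is open in $\mathcal{S}$ and the orbit map is locally trivial, so the path $\{f_t\}$ lifts).

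Evaluating at $t=1$ gives $f\circ(\phi\circ\psi_1)=\rho_1\circ f$. Comparing images forces $\rho_1$ to preserve the interval $f(M)$ and to fix its endpoints, so $\rho_1$ is isotopic, through diffeomorphisms of $\mathbb{R}$ fixing these endpoints, to the identity; reparametrizing the level sets of $f$ along a gradient-like vector field then produces $\Xi\in\mathrm{Diff}^{+}(M)$, smoothly isotopic to the identity, with $f\circ\Xi=\rho_1\circ f$. Consequently $f\circ(\phi\circ\psi_1\circ\Xi^{-1})=f$, and since $\psi_1$ and $\Xi$ are smoothly isotopic to the identity, $\phi_0:=\phi\circ\psi_1\circ\Xi^{-1}$ is smoothly isotopic to $\phi$, which proves the statement.

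It remains to explain why $\mathcal{S}$ is path-connected, and this is the only point at which Cerf's theory and the hypothesis on $\dim M$ enter. Two special generic functions present $M$, via a middle regular level, as a union of two $m$-disks along a smoothly embedded separating $(m-1)$-sphere; one first makes these two separating spheres coincide by an ambient isotopy, and then compares the two disk decompositions by means of Cerf's (pseudo)isotopy results \cite{cerf}, the whole construction being carried out with a parameter so as to produce an actual path in $\mathcal{S}$. This is essentially the argument of \cite{saeki2}. I expect the main obstacle to be the first step, the isotopy-theoretic uniqueness of the separating sphere: for $\dim M\geq 6$ it follows from the $h$-cobordism theorem and the smooth disk theorem, in low dimensions it is handled by classical arguments, while for $\dim M=5$ it is tied to the still-open smooth four-dimensional Schoenflies problem, which is exactly why that dimension is excluded.
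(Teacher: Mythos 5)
Your formal mechanism in the first two paragraphs (the action of $\mathrm{Diff}(M)\times\mathrm{Diff}^{+}(\mathbb{R})$ on the space $\mathcal{S}$ of special generic functions, openness and local triviality of the orbit by stability, path lifting, and the final correction by $\Xi$) is sound, and it is packaged differently from the paper's argument, which simply combines Example \ref{ex:4} with Proposition \ref{prop:5}: isotope $\phi$ to a diffeomorphism supported in one hemispherical disc of the decomposition determined by $f$, and use the fact that the Gromoll filtration number of every diffeomorphism of the disc rel boundary exceeds $1$ (Cerf) to make it commute with the height function. The difficulty is that your packaging does not reduce anything: by the very fibration sequence you set up, $\pi_0(\mathcal{S})=\ast$ is \emph{equivalent} to the assertion that every component of $\mathrm{Diff}(M)$ contains a diffeomorphism preserving $f$ up to target reparametrization, i.e.\ essentially to Proposition \ref{prop:4} itself. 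All of the content is therefore concentrated in your final paragraph, and that paragraph is not a proof. After the two separating spheres and the two disc decompositions have been matched --- which is indeed unproblematic, since the spheres bound embedded discs by hypothesis and the disc theorem applies --- what remains is exactly the hard point: to join, by a path of one-critical-point functions on $D^{m}$ rel boundary, the restrictions of the two functions to each disc. That is precisely the statement that a diffeomorphism of the disc fixing the boundary can be isotoped to commute with the radial (height) function, i.e.\ the paper's Example \ref{ex:4} fed into Proposition \ref{prop:5}; your phrase ``compares the two disk decompositions by means of Cerf's (pseudo)isotopy results \dots carried out with a parameter'' is a placeholder for the entire argument rather than an argument.

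Two smaller points. You locate the main obstacle in the wrong step: the isotopy uniqueness of the separating sphere is the easy part, while the comparison of the functions on the two discs is where Cerf's theory genuinely enters. And your explanation of the exclusion of $\dim M=5$ is off: the middle level is a $4$-sphere that already bounds discs on both sides, so the smooth $4$-dimensional Schoenflies problem is not the obstruction; what is missing in that dimension is the Cerf-type control over $\mathrm{Diff}(D^{4},\partial D^{4})$ and $\mathrm{Diff}(S^{4})$ (pseudoisotopy versus isotopy) needed to support a diffeomorphism of the homotopy $5$-sphere in a disc and then straighten it --- the same reason $k=4$ is excluded in Propositions \ref{prop:3} and \ref{prop:5}.
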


We introduce works of Wrazidlo \cite{wrazidlo}.
\begin{Def}[\cite{wrazidlo}]
\label{def:5}
A special generic map on a closed manifold is said to be {\it standard} if the Reeb space is a closed disc of dimension equal to the dimension of the target manifold. 
\end{Def}

Standard special generic maps are homotopic to a special generic map whose singular set is a sphere and whose restriction to the singular set is an embedding. Moreover, we can consider this homotopy as a homotopy such that at each point, the map is also a standard special generic map. 
Note that if a closed manifold admits a standard special generic map into the $n$-dimensional Euclidean space, it admits a standard special generic
 map into any lower dimensional
 Euclidean space. We can know this by considering a canonical projection or more generally, by virtue of Proposition \ref{prop:7} or Corollary \ref{cor:1} later. In fact, if we project the unit disc representing a standard special generic map by a canonical projection, the image is also the unit disc
 in the target space and the singular value set is the boundary of the unit disc. We define the {\it fold perfection} of a homotopy sphere by
 the maximal dimension of the Euclidean space into which the sphere admits a standard special generic map. We define the {\it special generic dimension set} of a closed manifold as the set of all integers $k$ such that the manifold admits a special generic map into ${\mathbb{R}}^k$: in Example \ref{ex:2.2}, a special generic map on a homotopy sphere into a Euclidean whose dimension is lower than the source manifold is characterized as a special generic map into a Euclidean whose dimension is lower than the source manifold such that the Reeb space is contractible. We review several explicit related facts.

\begin{Ex}
\label{ex:3}
\begin{enumerate}
\item
A Morse function with just two singular points before is a standard special generic map.
\item Canonical projections of unit spheres are standard special generic maps. 
\item A closed manifold admitting a standard special generic map is a homotopy sphere and if the dimension of the source manifold is smaller than $7$, then the homotopy sphere is a standard sphere. Moreover, every homotopy sphere except exotic $4$-dimensional spheres admits a
 standard special generic map into the plane and the fold perfection is larger than $1$. See \cite{saeki2} for example. 
\end{enumerate}
\end{Ex}

\begin{Def}
\label{def:6}
Let $k$ be an integer larger than $6$. Let $l$ be a positive integer not larger than $k$ and let a diffeomorphism $\phi$ on the unit disc $D^{k-1} \subset {\mathbb{R}}^{k-1}$ fixing all the points in the boundary can be smoothly isotoped to ${\phi}_0$
so that ${{\pi}_{k-1,l-1}} \mid_{D^{k-1}} \circ {\phi}_0={\pi}_{k-1,l-1} {\mid}_{D^{k-1}}$. The {\it Gromoll filtration number} of the diffeomorphism $\phi$ is defined as the maximal number $l$.  
\end{Def}

\begin{Ex}
\label{ex:4}
For an integer $k>6$ and for
 every diffeomorphism on the unit disc $D^{k-1} \subset {\mathbb{R}}^{k-1}$ fixing all the points in the boundary, the Gromoll filtration number is larger than $1$. 
\end{Ex}

\begin{Prop}[\cite{wrazidlo}]
\label{prop:5}
Let $k \geq 6$ be an integer.
We can obtain an orientation preserving diffeomorphism on $S^k$ by regarding $D^k$ as the hemisphere of the unit sphere of ${\mathbb{R}}^{k+1}$ by an injection $x \mapsto (x,\sqrt{1-{\parallel x \parallel}^2}) \in S^k$ and extending
 the diffeomorphism fixing all the points of the boundary on $D^k$ to the unit sphere $S^k$ by using the identity map. If 
the Gromoll filtration number is larger than $l$, then we can smoothly isotope the diffeomorphism to ${\phi}_0$ so that ${\pi}_{k+1,l} {\mid}_{S^k} \circ {\phi}_0 = {\pi}_{k+1,l} {\mid}_{S^k}$
holds.
For a positive integer $k$ smaller than $6$ and not $4$ and a positive integer $l \leq k$, then we can smoothly isotope the diffeomorphism on $S^k$ to ${\phi}_0$ so
 that ${\pi}_{k+1,l} {\mid}_{S^k} \circ {\phi}_0 = {\pi}_{k+1,l} {\mid}_{S^k}$ holds.
\end{Prop}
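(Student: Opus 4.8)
The plan is to push the whole question through the hemisphere parametrization and then reduce it, by a short index computation, to Definition \ref{def:6} in the range $k\ge6$ and to the vanishing of $\pi_0$ of low–dimensional diffeomorphism groups of spheres in the range $k<6$. First I would fix notation: let $g$ denote the given diffeomorphism of $D^k$ restricting to the identity on $\partial D^k$, and let $\Phi$ denote the diffeomorphism of $S^k$ produced from it by conjugating with the hemisphere injection ${\iota}\colon x\mapsto(x,\sqrt{1-{\parallel x\parallel}^{2}})$ onto $\{x_{k+1}\ge0\}$ and using the identity on $\{x_{k+1}\le0\}$; since $g$ is the identity on $\partial D^k$ it is orientation preserving, hence so is $\Phi$. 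After a preliminary isotopy rel $\partial D^k$ — which changes neither the isotopy class of $g$ rel $\partial D^k$, and so changes neither its Gromoll filtration number nor the isotopy class of $\Phi$ — I may and do assume that $g$, and every isotopy of it used below, is the identity on a collar of $\partial D^k$, so that extension by the identity on the lower hemisphere produces honest smooth diffeomorphisms and smooth isotopies of $S^k$.

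For $k\ge6$: suppose $g$ is smoothly isotopic rel $\partial D^k$ to a diffeomorphism $h$ with ${\pi}_{k,l}{\mid}_{D^k}\circ h={\pi}_{k,l}{\mid}_{D^k}$. Extending that isotopy by the identity on the lower hemisphere gives a smooth isotopy from $\Phi$ to the extension ${\phi}_0$ of $h$. On $\{x_{k+1}\le0\}$ the map ${\phi}_0$ is the identity, so ${\pi}_{k+1,l}{\mid}_{S^k}\circ{\phi}_0={\pi}_{k+1,l}{\mid}_{S^k}$ there; on the upper hemisphere a point is $p=(x,\sqrt{1-{\parallel x\parallel}^{2}})$ with ${\phi}_0(p)=(h(x),\sqrt{1-{\parallel h(x)\parallel}^{2}})$, so — using $l\le k$ — the first $l$ coordinates of ${\phi}_0(p)$ are ${\pi}_{k,l}(h(x))={\pi}_{k,l}(x)$, which are the first $l$ coordinates of $p$; hence ${\pi}_{k+1,l}{\mid}_{S^k}\circ{\phi}_0={\pi}_{k+1,l}{\mid}_{S^k}$ on all of $S^k$. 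It remains only to produce $h$, and here I would invoke Definition \ref{def:6} with its parameter ``$k$'' taken to be $k+1$ (legitimate, since $k\ge6$ forces $k+1>6$) and its parameter ``$l$'' taken to be $l+1\le k+1$: because ${\pi}_{(k+1)-1,(l+1)-1}={\pi}_{k,l}$, the hypothesis that the Gromoll filtration number of $g$ is larger than $l$ is exactly the assertion that such an $h$ exists. This settles the case $k\ge6$.

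For $k\in\{1,2,3,5\}$ I would avoid the Gromoll filtration entirely: $\Phi$ is an orientation preserving diffeomorphism of $S^k$, and for each of these $k$ every orientation preserving diffeomorphism of $S^k$ is smoothly isotopic to $\mathrm{id}_{S^k}$ — elementary for $k=1$, Smale's theorem for $k=2$, Cerf's theorem for $k=3$, and $\pi_0\mathrm{Diff}^{+}(S^5)\cong\Theta_6=0$ for $k=5$. Taking ${\phi}_0=\mathrm{id}_{S^k}$ then gives ${\pi}_{k+1,l}{\mid}_{S^k}\circ{\phi}_0={\pi}_{k+1,l}{\mid}_{S^k}$ for every $l\le k$. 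The dimension $k=4$ is excluded precisely because the triviality of $\pi_0\mathrm{Diff}^{+}(S^4)$ is not known.

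The step I expect to be the main (and essentially the only) obstacle is the collar reduction flagged in the setup: arranging that $g$, and all isotopies under consideration, can be taken to be the identity on a collar of $\partial D^k$, so that transporting them to $S^k$ via ${\iota}$ genuinely yields smooth diffeomorphisms and smooth isotopies — the point being that ${\iota}$ itself is not a diffeomorphism onto the closed hemisphere along the equator. This is standard (collar uniqueness together with the isotopy extension theorem), but it has to be said. Everything after that is bookkeeping, the one delicate item being the index shift between the $D^k$–projections of Definition \ref{def:6} and the $S^k$–projections in the statement, which is harmless exactly because $l\le k$.
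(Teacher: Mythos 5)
Your argument is correct, but note that the paper itself offers no proof of Proposition \ref{prop:5} at all: it is stated as a quoted result of Wrazidlo, so there is nothing internal to compare against. Taken on its own terms, your write-up is a sound and essentially complete derivation from the paper's Definition \ref{def:6} plus classical facts. The index bookkeeping is right: applying Definition \ref{def:6} with its parameter ``$k$'' equal to $k+1$ (legitimate since $k\geq 6$) and ``$l$'' equal to $l+1$ turns ``Gromoll filtration number $>l$'' into exactly the existence of $h$ with ${\pi}_{k,l}{\mid}_{D^k}\circ h={\pi}_{k,l}{\mid}_{D^k}$, and since $l\leq k$ the added coordinate $\sqrt{1-{\parallel x\parallel}^2}$ never enters the first $l$ coordinates, so the relation transports to ${\pi}_{k+1,l}{\mid}_{S^k}$ on the upper hemisphere and holds trivially on the lower one. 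You are also right to single out the collar normalization as the one genuine technical point, since the hemisphere parametrization fails to be a diffeomorphism along the equator; isotoping $g$ rel boundary to be the identity on a collar (and keeping all subsequent isotopies supported away from the collar) is the standard and correct fix. For $k\in\{1,2,3,5\}$ your appeal to the connectivity of ${\rm Diff}^{+}(S^k)$ (elementary, Smale, Cerf, and $\Theta_6=0$ respectively) makes the conclusion immediate with ${\phi}_0$ the identity, and matches the paper's exclusion of $k=4$. In short: the proposal is a valid proof of a statement the paper only cites.
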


Note that Example \ref{ex:4} and Proposition \ref{prop:5} prove Proposition \ref{prop:4}.

The following is a key to the proof of Theorem \ref{thm:3}, extending Proposition \ref{prop:6}.

\begin{Prop}
\label{prop:6}
If $k$ is $1,2,3$ or $5$, or for an integer $k \geq 6$ and the Gromoll filtration number of the diffeomorphism on $D^k$ fixing all the points in the boundary is always larger than $l$, then we can smoothly isotope
 any diffeomorphism $\phi$ on $S^k$ to ${\phi}_0$ so that ${\pi}_{k+1,l} {\mid}_{S^k} \circ {\phi}_0 = {\pi}_{k+1,l} {\mid}_{S^k}$ holds.
\end{Prop}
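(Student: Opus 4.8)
\textit{Proof idea.} The plan is to reduce an arbitrary diffeomorphism of $S^k$ to one of the ``disc--extended'' diffeomorphisms treated in Proposition \ref{prop:5}, and then to invoke that proposition.

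First I would dispose of the orientation. Let $r\colon S^k\to S^k$ be the reflection $(x_1,\dots,x_{k+1})\mapsto(x_1,\dots,x_k,-x_{k+1})$, which reverses orientation. Since $l\le k$, the first $l$ coordinates are unaffected by $r$, so ${\pi}_{k+1,l}{\mid}_{S^k}\circ r={\pi}_{k+1,l}{\mid}_{S^k}$. Hence, given an orientation reversing $\phi$, it suffices to treat the orientation preserving diffeomorphism $r\circ\phi$: if $r\circ\phi$ is smoothly isotopic to some ${\psi}_0$ with ${\pi}_{k+1,l}{\mid}_{S^k}\circ{\psi}_0={\pi}_{k+1,l}{\mid}_{S^k}$, then $\phi=r\circ(r\circ\phi)$ is smoothly isotopic to ${\phi}_0:=r\circ{\psi}_0$, which again satisfies ${\pi}_{k+1,l}{\mid}_{S^k}\circ{\phi}_0={\pi}_{k+1,l}{\mid}_{S^k}$. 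From now on $\phi$ may be assumed orientation preserving.

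Next comes the core step: every orientation preserving diffeomorphism $\phi$ of $S^k$ is smoothly isotopic to one that is the identity on the lower hemisphere $D^k_-:=\{x\in S^k:x_{k+1}\le 0\}$. For this I would view ${\phi}{\mid}_{D^k_-}$ as a smooth embedding of the disc $D^k_-$ into $S^k$ which, since $\phi$ preserves orientation, induces the same orientation as the inclusion; by the disc (uniqueness) theorem of Palais and Cerf, together with the isotopy extension theorem, there is a compactly supported smooth ambient isotopy ${\Psi}_t$, $t\in[0,1]$, of $S^k$ with ${\Psi}_0=\mathrm{id}$ and ${\Psi}_1\circ{\phi}{\mid}_{D^k_-}$ equal to the inclusion. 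Then ${\Psi}_1\circ\phi$ is smoothly isotopic to $\phi$, fixes $D^k_-$ pointwise, and therefore carries the upper hemisphere $D^k_+:=\{x_{k+1}\ge 0\}$ to itself fixing $\{x_{k+1}=0\}$ pointwise. Thus ${\Psi}_1\circ\phi$ is exactly the extension by the identity on $D^k_-$ of the diffeomorphism $\psi:=({\Psi}_1\circ\phi){\mid}_{D^k_+}$ of the disc $D^k_+$ fixing all boundary points --- precisely the type of diffeomorphism appearing in Proposition \ref{prop:5}, after the standard identification of $D^k_+$ with $D^k\subset{\mathbb{R}}^k$ used there.

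Finally I would assemble the conclusion. Under the hypotheses of the proposition --- $k\in\{1,2,3,5\}$, so that $k<6$, $k\ne 4$ and $l\le k$, or $k\ge 6$ with the Gromoll filtration number of every disc diffeomorphism fixing the boundary larger than $l$ --- Proposition \ref{prop:5} applies to the disc--extended diffeomorphism ${\Psi}_1\circ\phi$ and produces a smooth isotopy from it to some ${\phi}_0$ with ${\pi}_{k+1,l}{\mid}_{S^k}\circ{\phi}_0={\pi}_{k+1,l}{\mid}_{S^k}$. Concatenating this isotopy with the isotopy $\phi\simeq{\Psi}_1\circ\phi$ (and, in the orientation reversing case, post-composing everything with $r$ as above), and reparametrising near the endpoints so that the concatenation is again a smooth isotopy of $S^k$, finishes the proof. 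The main obstacle is the core step: checking that an arbitrary orientation preserving self-diffeomorphism of $S^k$ becomes, up to isotopy, the identity on a hemisphere. This rests only on the disc theorem, which is valid in all dimensions; this is why the argument itself does not need to exclude $k=4$, and the exclusion of $k=4$ in the statement is forced solely by the corresponding restriction in Proposition \ref{prop:5}.
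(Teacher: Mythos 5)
Your proposal is correct and follows essentially the same route as the paper: reduce to the orientation preserving case by composing with a reflection that commutes with ${\pi}_{k+1,l}{\mid}_{S^k}$, isotope the result to an extension by the identity of a disc diffeomorphism fixing the boundary, and apply Proposition \ref{prop:5}. The only difference is that you make explicit, via the Palais--Cerf disc theorem and isotopy extension, the reduction to a disc-extended diffeomorphism, a step the paper's proof leaves implicit.
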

\begin{proof}
Every orientation reversing diffeomorphism on $S^k$ regarded as the unit sphere $S^k \subset {\mathbb{R}}^{k+1}$ is represented by the composition of an orientation reversing diffeomorphism on $S^k$ and an orientation preserving diffeomorphism
 on $S^k$, which is obtained as the extension of a diffeomorphism on the disc $D^k$ fixing all the points of the boundary regarded as the hemisphere of $S^k$, by virtue of Proposition \ref{prop:5}.
In the decomposition of the diffeomorphism, we can take any orientation reversing one and take such a diffeomorphism ${\phi}^{\prime}$ so that the relation ${\pi}_{k+1,l} {\mid}_{S^k} \circ {{\phi}^{\prime}} = {\pi}_{k+1,l} {\mid}_{S^k}$ holds: for example consider a reflection on a hyperplane including the origin and its reflection to the unit sphere. From this, we can take the orientation reversing diffeomorphism $\phi$ so that ${\pi}_{k+1,l} {\mid}_{S^k} \circ {\phi} = {\pi}_{k+1,l} {\mid}_{S^k}$ holds. This completes the proof.
\end{proof}

\subsection{Results.}
\subsubsection{Lifts to immersions.}
\begin{Thm}
\label{thm:1}
We can lift a normal standard-spherical Morse function to an immersion of codimension $1$ if the source manifold $M$ of dimension $m \geq 1$ is orientable.
If $m=1,3,7$ holds, then we do not need the assumption of the orientability of $M$. 
Moreover, we can lift a normal standard-spherical Morse function to an embedding of codimension $1$ if the source manifold $M$ of dimension $m\geq 1$ is orientable and for the dimension $m$, either of the following holds.
\begin{enumerate}
\item $1 \leq m \leq 6$.
\item $m \geq 7$ and the oriented diffeomorphism group of $S^{m-1}$ is connected.
\end{enumerate}
\end{Thm}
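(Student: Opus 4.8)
The plan is to build the lift over the Reeb space $W_f$ of $f$, which by Example~\ref{ex:2}~(\ref{ex:2.1}) is a finite graph: the quotient map $q_f:M\rightarrow W_f$ exhibits $M$ as a union of \emph{edge pieces} and \emph{vertex pieces}. Over the interior of an edge, $q_f$ is the projection of a smooth $S^{m-1}$-bundle (the fibre being the \emph{standard} sphere because $f$ is standard-spherical), necessarily trivial over an interval, so the piece is a cylinder $S^{m-1}\times[a,b]$ on which $f$ is, up to reparametrisation, the projection to $[a,b]$. A vertex of valence $1$ corresponds to a local extremum of $f$ and, by normality (Definition~\ref{def:3}), its piece is a standard closed disc $D^m$ on which $f$ is, after an affine change of coordinates, $y\mapsto\|y\|^2$. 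A vertex of valence $3$ corresponds to a critical point whose piece, again by normality, is a ``pair of pants'', the standard elementary cobordism between $S^{m-1}\sqcup S^{m-1}$ and $S^{m-1}$ carrying the standard Morse function with a single critical point; note that for $m=1$ only disc pieces occur.

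First I would fix the lift on each piece separately, as a map into $\mathbb{R}\times\mathbb{R}^m=\mathbb{R}^{m+1}$ whose first coordinate is $f$. On a disc piece take $y\mapsto(\|y\|^2,y)$ (or $y\mapsto(c-\|y\|^2,y)$ at a local maximum): this is a graph, hence an embedding, and its restriction to the boundary $S^{m-1}$ is a round sphere, which near the junction we arrange to be $(\sqrt t\cdot i_0)(S^{m-1})$ at level $t$, with $i_0:S^{m-1}\subset\mathbb{R}^m$ the canonical embedding. On a pair of pants use a fixed standard model embedded in $\mathbb{R}\times\mathbb{R}^m$: realise the two lower and the one upper sphere as round spheres sitting far apart in the horizontal hyperplanes $\mathbb{R}^m\times\{t\}$ and let the two lower ones merge through a single saddle; this is an explicit Morse-theoretic construction whose boundary restrictions are again multiples of $i_0$ up to translation. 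On a cylinder $S^{m-1}\times[a,b]$ take $(x,t)\mapsto(t,H(x,t))$ for a smooth family $H(\cdot,t):S^{m-1}\rightarrow\mathbb{R}^m$ of immersions; one checks immediately that this is an immersion of the cylinder, and an embedding if each $H(\cdot,t)$ is an embedding, once the images are translated to stay disjoint from the other pieces living at the same level.

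The pieces must then be glued along copies of $S^{m-1}$ by the diffeomorphisms dictated by the smooth structure of $M$, and the crux is to choose the families $H$ so that the pieces of the lift agree. A cylinder carrying $i_0$ (up to a positive scalar) on one end must carry $i_0\circ\phi$ on the other, where $\phi$ is the relevant gluing diffeomorphism of $S^{m-1}$, possibly accumulated around a cycle of $W_f$; so I need a regular homotopy --- respectively an isotopy --- from $i_0$ to $i_0\circ\phi$ running along the cylinder, which is exactly the content of Proposition~\ref{prop:2} --- respectively Proposition~\ref{prop:3}. If $M$ is orientable we orient all pieces compatibly, so every such $\phi$ is orientation preserving and Proposition~\ref{prop:2} applies; if $m=1,3,7$ then $S^{m-1}=S^0,S^2,S^6$ and Proposition~\ref{prop:2} applies to every $\phi$, whence the orientability hypothesis can be dropped. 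For the embedding statement one instead needs $i_0$ and $i_0\circ\phi$ to be isotopic for the (orientation preserving) $\phi$; Proposition~\ref{prop:3} grants this whenever $1\le m\le 6$, and for $m\ge 7$ it follows from connectedness of the oriented diffeomorphism group of $S^{m-1}$.

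The main obstacle I anticipate is organisational rather than conceptual: assembling the finitely many edge and vertex pieces, with the reparametrisations and (regular) homotopies above, into one globally smooth immersion, and --- in the embedding case --- keeping the finitely many sphere slices at every level $t$ pairwise disjoint and in controlled position, letting them merge or appear only at the valence-$3$ and valence-$1$ vertices of $W_f$. This is most cleanly handled by first fixing an embedding of the graph $W_f$ into $\mathbb{R}\times\mathbb{R}^m$ whose first coordinate is the height function induced by $f$, and then thickening it fibrewise; Propositions~\ref{prop:2} and~\ref{prop:3} are precisely what make the fibrewise matching possible, and the dimension conditions in the statement delimit exactly the range in which that matching is available.
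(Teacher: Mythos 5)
Your proposal is correct and follows essentially the same route as the paper: local lifts of the vertex pieces (discs and the standard index-$1$ model, whose identification with the standard model is what the paper isolates as Lemma~\ref{lem:1}), then matching along the edge pieces by regular homotopies from $i_0$ to $i_0\circ\phi$ via Proposition~\ref{prop:2}, and by isotopies via Proposition~\ref{prop:3} for the embedding case.
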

\begin{proof}
The inverse image of a small closed neighborhood of a point of the Reeb space whose inverse image contains a singular point can be lifted to a codimension $1$ embedding
 as in FIGURE \ref{fig:1} or FIGURE \ref{fig:2}. 

\begin{figure}
\includegraphics[width=10mm]{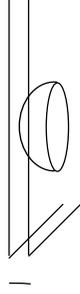}
\caption{The inverse image of a small $1$-dimensional subcomplex including a singular point of index $0$.}
\label{fig:1}
\end{figure} 
\begin{figure}
\includegraphics[width=20mm]{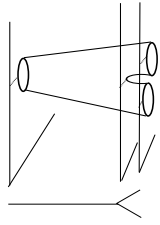}
\caption{The inverse image of a small $1$-dimensional subcomplex including a singular point of index $1$.}
\label{fig:2}
\end{figure} 
They are regarded as a subset of the unit sphere and a product of two standard spheres canonically embedded as
 submanifolds whose codimensions are $1$, respectively. For the former case, the fact follows from the fact that the function is locally regarded as the natural height function of a closed disc.  Note also that for the latter case, we need a specific case of Lemma \ref{lem:1}.

\begin{Lem}
\label{lem:1}
For a pair of small closed and connected subspaces of Reeb spaces of normal spherical Morse functions such that each subspace includes just one branched point, consider the inverse images and the restrictions of the functions. We assume that the following hold.
\begin{enumerate}
\item For the images of the restrictions of the functions, consider the inverse images of points in the boundaries such that the inverse images are connected, then they are diffeomorphic homotopy spheres.   
\item For the images of the restrictions of the functions, consider the inverse images of points in the boundaries such that the inverse images are not connected {\rm (}and have $2$ connected components diffeomorphic to homotopy spheres{\rm )}, then they are diffeomorphic.    
\end{enumerate}
Then the restrictions of the two functions, which we denote by $c_1$ and $c_2$ respectively, are $C^{\infty}$ equivalent or for two suitable diffeomorphisms $\Phi$ and $\phi$, the relation $\phi \circ c_1=c_2 \circ \Phi$ holds. 
\end{Lem}

\begin{proof}[Sketch of the proof]
This is mentioned in section 2 of \cite{saeki4} etc. in the case where the inverse image is $2$-dimensional. 
We present a sketch of the proof based on FIGURE \ref{fig:a}, representing the source manifold of a restriction we are considering of a normal spherical Morse function.

\begin{figure}
\includegraphics[width=100mm]{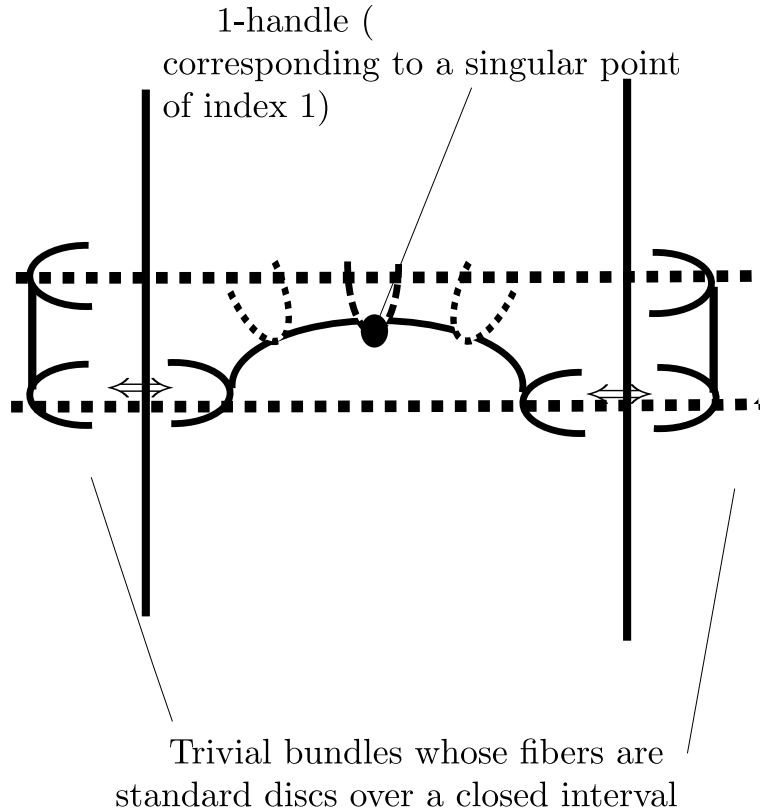}
\caption{The restriction of a normal spherical function to an inverse image of a small $1$-dimensional subcomplex including a singular point of index $1$.}
\label{fig:a}
\end{figure} 

The obtained function is decomposed into $3$ functions as the figure, One is a restriction of a Morse function naturally corresponding to a $1$-handle and the others have no singular points and are trivial bundles over the closed interval coinciding with the image by virtue of (the relative version of) Ehresmann's fibration theorem. By gluing them in a canonical way, we obtain the function and the two constraints on smooth structures of homotopy spheres of inverse images of regular values complete the proof. 
\end{proof}

Note that by canonical projections, we obtain a cobordism of special generic functions and maps, discussed in \cite{saeki3} and also
 in \cite{sadykov} and this fact is key in Theorem \ref{thm:3} later for example. The image is as in FIGURE \ref{fig:3} and the thick lines except vertical ones represent the singular value set, which is an embedded image of the singular set.

\begin{figure}
\includegraphics[width=40mm]{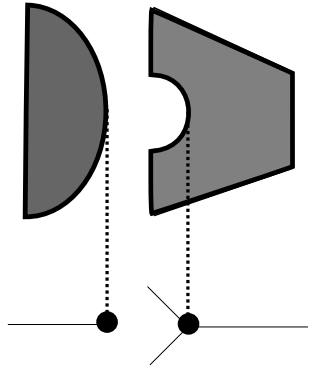}
\caption{Cobordisms of special genreic functions (maps).}
\label{fig:3}
\end{figure} 

Then, the rest of the proof is essentially based on the idea of the proof of Theorem 4.1 of \cite{saekitakase}. We can lift the inverse images of subsets
 PL homeomorphic to closed intervals consisting of non-singular points to immersions obtained by considering regular
 homotopies of immersions of spheres by virtue of Proposition \ref{prop:2}. As a result, we obtain the desired lift.

 The rest of the statement follows from Proposition \ref{prop:3} by considering isotopies instead of regular homotopies.
\end{proof}

We will present visualizations of explicit lifts of normal spherical Morse functions of Theorem \ref{thm:1} with and Theorem \ref{thm:2} and Theorem \ref{thm:3}, presented later, in Example \ref{ex:5}.

\begin{Thm}
\label{thm:2}
We can lift every normal standard-spherical Morse function to an immersion of codimension $2$. 
\end{Thm}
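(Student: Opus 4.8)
The plan is to repeat the construction from the proof of Theorem~\ref{thm:1} essentially verbatim, but to carry it out one dimension higher, inside $\mathbb{R}^{m+2}$ rather than $\mathbb{R}^{m+1}$ (here $m=\dim M$); the one extra normal direction is exactly what removes the orientability constraint. So, given a normal standard-spherical Morse function $f\colon M\to\mathbb{R}$, we seek a lift $f_{m+1}\colon M\to\mathbb{R}^{m+2}$ with $\pi_{m+2,1}\circ f_{m+1}=f$ which is an immersion of codimension $2$.

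First I would recall the decomposition of $M$ dictated by the Reeb space. By Example~\ref{ex:2} the Reeb space $W_f$ is a finite graph; by normality each singular point has index $0$ or $1$, contributing respectively a univalent or a trivalent vertex of $W_f$, and each edge of $W_f$ corresponds to a piece $f^{-1}(I)\cong S^{m-1}\times I$ on which $f$ is the projection to the interval $I$ (Ehresmann's theorem). Cut $M$ into the inverse images of small closed stars of the vertices and the inverse images of the remaining closed subintervals of the edges. Over each vertex star, lift $f$ to a codimension $2$ embedding over $\mathbb{R}$ exactly as in the codimension $1$ case of Theorem~\ref{thm:1}: for a univalent vertex this is the height function of a round $m$-disc (as in Figure~\ref{fig:1}), and for a trivalent vertex one first applies Lemma~\ref{lem:1} to identify $f$, up to $C^{\infty}$-equivalence, with the standard model on $S^m$ minus three discs and then realizes that model as an embedding (as in Figure~\ref{fig:2}); in both cases arrange that every boundary $S^{m-1}$ is sent to a standardly and roundly embedded sphere in a fibre $\{t\}\times\mathbb{R}^{m+1}$. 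Having a spare normal direction only makes these local realizations easier, so this step is routine.

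The substantive step is the interpolation over an edge piece $E\cong S^{m-1}\times I$. Its two ends already carry immersions (indeed embeddings) of $S^{m-1}$ into $\mathbb{R}^{m+1}$, and a lift of $f|_E$ over $I$ is the same thing as a regular homotopy between them. After identifying both ends with the standard round sphere, these immersions are $i_0$ and $i_0\circ\phi$ for some diffeomorphism $\phi$ of $S^{m-1}$ coming from the gluing data, and the point is that $i_0$ and $i_0\circ\phi$ are regularly homotopic in $\mathbb{R}^{m+1}$ for \emph{every} $\phi\in\mathrm{Diff}(S^{m-1})$: if $\phi$ preserves orientation this is Proposition~\ref{prop:2}, which supplies a regular homotopy already in $\mathbb{R}^m$ and hence a fortiori in $\mathbb{R}^{m+1}$; if $\phi$ reverses orientation, rotating $\mathbb{R}^{m+1}$ by the angle $\pi$ in the $2$-plane spanned by the last two coordinate axes is an ambient isotopy carrying $i_0$ to $i_0\circ r$ for a reflection $r$ of $S^{m-1}$, so writing $\phi=r\circ\psi$ with $\psi$ orientation-preserving reduces this case to the previous one. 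Concatenating these regular homotopies with the local embeddings over the vertex stars produces the desired codimension $2$ immersion $f_{m+1}$; since no orientability of $M$ was needed, \emph{every} normal standard-spherical Morse function is covered.

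The only genuine obstacle is this orientation-reversing interpolation, and it is precisely here that codimension $2$ pays off: in codimension $1$ this step is what forces the orientability hypothesis (or the exceptional dimensions $1,3,7$) in Theorem~\ref{thm:1}, whereas in $\mathbb{R}^{m+2}$ an honest ambient rotation flips a round fibre sphere and the obstruction disappears. The remaining work — choosing the local embeddings with round, pairwise compatibly positioned boundary spheres so that the regular homotopies over adjacent edges fit together smoothly — is bookkeeping, handled just as in Theorem~\ref{thm:1}.
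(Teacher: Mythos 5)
Your proposal is correct and follows essentially the same route as the paper: lift the neighborhoods of singular points to codimension $2$ embeddings as in Theorem \ref{thm:1}, interpolate over edge pieces by regular homotopies from Proposition \ref{prop:2} for orientation-preserving gluing diffeomorphisms, and handle orientation-reversing ones by the ambient rotation of $\mathbb{R}^{m+1}$ in the plane of the last two coordinates, which is exactly the "turning the sphere upside-down" isotopy of FIGURE \ref{fig:4} in the paper's proof.
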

\begin{proof}
As Theorem \ref{thm:1}, the inverse image of a small closed neighborhood of a point of the Reeb space whose inverse image contains a singular point can be lifted to a codimension $2$ embedding
 as in FIGURE \ref{fig:1} or FIGURE \ref{fig:2} before.
 
Then, the rest of the proof is essentially based on the ideas of the proofs of Theorem 4.1 \cite{saekitakase} and Theorem \ref{thm:1}. By virtue of Proposition \ref{prop:2}, we can lift the inverse images of subsets
 PL homeomorphic to closed intervals consisting of non-singular points to immersions obtained by considering regular
 homotopies of immersions of spheres preserving the orientation and if we need, by considering also an isotopy as in FIGURE \ref{fig:4} for reversing the orientation of the original image of the
 standard embedding $S^{m-1} \subset {\mathbb{R}}^m \subset {\mathbb{R}}^{m+1}$. Note that in the situation of this theorem including the previous embedding, we regard $S^{m-1}$ as the unit sphere of ${\mathbb{R}}^m$ and
 ${\mathbb{R}}^m$ as ${\mathbb{R}}^m \times \{0\} \subset {\mathbb{R}}^m \times \mathbb{R}={\mathbb{R}}^{m+1}$ canonically. We can know that for the standard
 embedding $i:S^{m-1} \rightarrow {\mathbb{R}}^{m+1}$ of codimension $2$ just before and for any diffeomorphism $\phi$ on $S^{m-1}$, $i$ and $i \circ \phi$ are regularly homotopic. As
 a result, we obtain the desired lift.
\begin{figure}
\includegraphics[width=30mm]{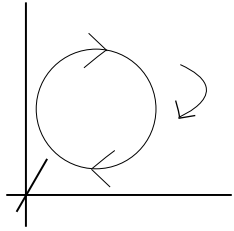}
\caption{An isotopy for reversing the orientation of the original image of the standard embedding of a unit sphere of codimension $2$ by turning the sphere upside-down.}
\label{fig:4}
\end{figure} 
\end{proof}
\subsubsection{Lifts to immersions, embeddings, and special generic maps.}
We review construction of spherical maps by \cite{saekisuzuoka}. It is explicitly used in the present paper in considerable cases. For a smooth map $c$, we denote the set of all singular points ({\it singular set}) by $S(c)$. we call $c(S(c))$ the {\it singular value set} of $c$. 
\begin{Prop}[\cite{saekisuzuoka} etc.]
\label{prop:7}
Let $m$, $n$ and $n^{\prime}$ be positive integers satisfying the relation $m \geq n>n^{\prime}$.
If we compose the canonical projection ${\pi}_{n{,n}^{\prime}}$ to a proper stable special generic map from an $m$-dimensional closed or open manifold into the $n$-dimensional Euclidean space into a lower dimensional Euclidean space ${\mathbb{R}}^{n^{\prime}}$, then
 it is a normal spherical fold map if the following hold.
\begin{enumerate}
\item ${{\pi}_{n,n^{\prime}} \circ f} \mid_{S(f)}$ is a fold map. 
\item For every $y \in ({\pi}_{n,n^{\prime}} \circ f)(M)- ({\pi}_{n,n^{\prime}} \circ f)(S(({\pi}_{n,n^{\prime}} \circ f) {\mid}_{S(f)}))$ each connected component 
$({\pi}_{n,n^{\prime}} \circ \bar{f})^{-1}(y) \subset W_f$ is compact and contractible.
\end{enumerate}
\end{Prop}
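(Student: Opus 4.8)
The plan is to exploit the factorisation of a special generic map through its Reeb space. Write $g:=\pi_{n,n'}\circ f$ and recall from Example \ref{ex:2}\,(\ref{ex:2.2}) that $f=\bar f\circ q_f$, where $q_f:M\to W_f$ is the quotient onto the Reeb space (an $n$-dimensional manifold with boundary), $\bar f:W_f\to\mathbb{R}^n$ is a codimension-$0$ immersion, $q_f$ over $\mathrm{int}\,W_f$ is an $S^{m-n}$-bundle projection, and a tubular neighbourhood of $S(f)=q_f^{-1}(\partial W_f)$ is a linear $D^{m-n+1}$-bundle over $\partial W_f$ on whose fibres $q_f$ is the norm-square map onto a collar ray. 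Setting $h:=\pi_{n,n'}\circ\bar f:W_f\to\mathbb{R}^{n'}$ one has $g=h\circ q_f$, and since $q_f$ is a quotient map with connected fibres, the connected components of $g^{-1}(y)$ are exactly the $q_f$-preimages of the connected components of $h^{-1}(y)=(\pi_{n,n'}\circ\bar f)^{-1}(y)$. First I would determine $S(g)$: over $\mathrm{int}\,W_f$ the map $h$ is a submersion (an immersion between equidimensional manifolds) and so is $q_f$, hence $g$ is a submersion there, while along $q_f^{-1}(\partial W_f)$ the image of $dg$ coincides with the image of $d(h|_{\partial W_f})$; thus $S(g)=q_f^{-1}(S(h|_{\partial W_f}))$, and via the diffeomorphism $q_f|_{S(f)}:S(f)\to\partial W_f$ hypothesis (1) says precisely that $h|_{\partial W_f}$ is a fold map, so $S(g)$ is a disjoint union of closed $(n'-1)$-dimensional submanifolds of $M$.

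Next I would put $g$ into fold normal form near a point $p\in S(g)$. Using a local trivialisation of the $D^{m-n+1}$-bundle near $\tilde p:=q_f(p)$ and a collar $\partial W_f\times[0,\varepsilon)\hookrightarrow W_f$, write nearby points of $M$ as $(z,w)$ with $z$ in a chart of $\partial W_f$ and $w\in\mathbb{R}^{m-n+1}$ small, so that $q_f(z,w)=(z,\|w\|^2)$ and hence $g(z,w)=h(z,\|w\|^2)$. Putting $h|_{\partial W_f}$ in its fold normal form of index $j=i(\tilde p)$ at $\tilde p$, Taylor-expanding $h(z,s)$ in the collar variable $s$, substituting $s=\|w\|^2$, and applying the Morse lemma with parameters to the last coordinate, one checks that $p$ is a fold point of $g$ whose index is $j$ or $j+(m-n+1)$ (then reduced to at most $\tfrac{m-n'+1}{2}$ by the convention of Proposition \ref{prop:1}), the two cases being governed by the sign of the component, along the cokernel direction of $h|_{\partial W_f}$, of $\pi_{n,n'}$ applied to the inward normal of $\bar f(\partial W_f)$ in $\bar f(W_f)$. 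In particular $g$ is a fold map; a standard argument (trivial when $M$ is closed) shows it is proper, and its stability follows, via the criterion recalled in Section \ref{sec:2}, from a transversality property of $g|_{S(g)}$.

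Then I would use hypothesis (2). For a regular value $y$ of $g$, $h^{-1}(y)=\bar f^{-1}(\pi_{n,n'}^{-1}(y))$ is a properly embedded $(n-n')$-submanifold of $W_f$ transverse to $\partial W_f$, and by (2) each component $C$ of it is a compact contractible manifold with $\partial C=C\cap\partial W_f$. Then $q_f^{-1}(C)$ is formed by gluing the $S^{m-n}$-bundle over $C$ to the linear $D^{m-n+1}$-bundle over $\partial C$, which is precisely the presentation in Example \ref{ex:2}\,(\ref{ex:2.2}) with $P=C$ (note $C$, being a compact contractible manifold with non-empty boundary, immerses in $\mathbb{R}^{n-n'}$); since $C$ is contractible, $q_f^{-1}(C)$ is a homotopy $(m-n')$-sphere, so $g^{-1}(y)$ is a disjoint union of homotopy spheres and $g$ is spherical. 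For normality, take a small arc $I$ meeting the singular value set of $g$ transversally at one interior point; the component of $g^{-1}(I)$ through a singular point is $q_f^{-1}(C')$ for the corresponding component $C'$ of $h^{-1}(I)$, which near $S(h|_{\partial W_f})$ is modelled on the fold $h|_{\partial W_f}$ together with the collar direction. Comparing with the local fold models of $g$ from the previous step and using contractibility of the components of $h^{-1}(y)$ for $y\in I$, one shows that only the index-$0$ model, for which $q_f^{-1}(C')\cong D^{m-n'+1}$, and the index-$1$ model, for which $q_f^{-1}(C')$ is PL homeomorphic to $S^{m-n'+1}$ with the interiors of three disjoint discs removed, can occur, matching cases (1) and (2) of Definition \ref{def:3}; this in particular forces all indices of $g$ to be $0$ or $1$, and so $g$ is a normal spherical fold map.

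The step I expect to be the main obstacle is the last one: extracting from the contractibility in hypothesis (2) the exact local shape of the preimages of small arcs — in particular excluding fold points of $g$ of index $\ge 2$ and components other than discs and pairs of pants — since this is where the global topology of $M$ enters, through Example \ref{ex:2}\,(\ref{ex:2.2}). The normal-form bookkeeping of the second step (the substitution $s=\|w\|^2$ and the parametrised Morse lemma) is the other technical point, together with the minor matter of verifying the stability of $g$.
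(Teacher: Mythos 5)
The paper does not prove Proposition \ref{prop:7}: it is imported verbatim from Saeki--Suzuoka, so there is no in-paper argument to compare yours with. Your route --- factoring $g=\pi_{n,n'}\circ f$ as $h\circ q_f$ through the Reeb space, locating $S(g)$ inside $S(f)$ via $h|_{\partial W_f}$, the local substitution $s=\|w\|^2$, and recognising $q_f^{-1}(C)$ as the source of a special generic map with contractible Reeb space $C$ --- is the natural one and, as far as the structure goes, the correct one. One point you leave implicit does hold automatically and is worth recording: the cokernel component $c$ of $d\pi_{n,n'}(\nu)$ ($\nu$ the collar direction) is nonzero because $d\bar f$ is an isomorphism of $n$-dimensional tangent spaces, so $d\pi_{n,n'}$ has rank $n'$ on $d\bar f(T_{\tilde p}W_f)$ but only rank $n'-1$ on $d\bar f(T_{\tilde p}\partial W_f)$ at a fold point of $h|_{\partial W_f}$; hence the fold nondegeneracy of $g$ in the $w$-directions is free, and your index count $j$ or $j+(m-n+1)$ is right.

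The genuine gap is the last step, and your own diagnosis of where the difficulty lives is slightly misdirected. You say the exclusion of indices $\geq 2$ is ``where the global topology of $M$ enters, through Example \ref{ex:2}\,(\ref{ex:2.2})'', i.e.\ through sphericality of the fibres of $g$. Sphericality of the regular fibres alone cannot exclude higher indices: an index-$2$ critical point can carry $S^3$ to $S^3$ (a $(\pm 1)$-framed $2$-handle attached along an unknot), so a fibrewise argument in $M$ is doomed. The exclusion has to be run one level down, on the components of $h^{-1}(y)\subset W_f$, where hypothesis (2) gives you contractibility rather than mere sphericality. Concretely: since $h$ is a submersion on $\operatorname{int}W_f$, all critical behaviour of $h$ on $h^{-1}(I)$ is boundary behaviour, and crossing the fold value of $h|_{\partial W_f}$ changes the level set $C_t'=h^{-1}(t)\cap C'$ by a handle attachment of some index $\lambda$ determined by $j$ and the sign of $c$. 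If $\lambda\geq 2$, the attaching sphere $S^{\lambda-1}$ is null-homologous in the contractible component it lies in, so the exact sequence of the pair forces $H_\lambda$ of the new component to surject onto $\mathbb{Z}$, contradicting (2); hence $\lambda\in\{0,1\}$, the Reeb-level transition is a birth/death of a contractible component or a boundary connected sum of two of them, and only then do the two local models of Definition \ref{def:3} (the disc, from a single index-$0$ critical point of $g|_N$, and the $3$-holed sphere, from the boundary connected sum of $\Sigma_1\times I$ and $\Sigma_2\times I$) drop out, together with the restriction of the fold indices of $g$ to $0$ and $1$. Your sketch asserts this conclusion but does not contain the mechanism; supplying the homological argument above (plus the bookkeeping relating $\lambda$ to $j$ and the sign of $c$) is what is needed to close the proof. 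The remaining loose ends you name (properness of $g$ when $M$ is open, and the normal-crossing condition on $g|_{S(g)}$ needed for stability) are real but minor in comparison.
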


\begin{Cor}[\cite{saekisuzuoka}]
\label{cor:1}
Let $n \geq 2$.
If we project a proper stable special generic map $f:M \rightarrow {\mathbb{R}}^n$ into ${\mathbb{R}}^{n-1}$ such that the map ${{\pi}_{n,n-1} \circ f} \mid_{S(f)}$ is a fold map and that $({\pi}_{n,n-1} \circ \bar{f})^{-1}(y) \subset W_f$ is compact, then
 it is a normal spherical fold map. 
\end{Cor}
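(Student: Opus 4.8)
The plan is to obtain this statement as the case $n^{\prime}=n-1$ of Proposition \ref{prop:7}, so the task is to verify the two hypotheses of that proposition in the present situation. Hypothesis (1), that $\pi_{n,n-1}\circ f\mid_{S(f)}$ is a fold map, is assumed. For hypothesis (2), compactness of each connected component of $(\pi_{n,n-1}\circ\bar{f})^{-1}(y)\subset W_f$ is also assumed, so the whole content is that, because the codimension drop is $1$, each such component is automatically contractible.

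First I would recall the structure of the Reeb space. By Example \ref{ex:2.2}, $W_f$ is an $n$-dimensional manifold with nonempty boundary carrying an immersion $\bar{f}\colon W_f\rightarrow{\mathbb{R}}^n$ with $f=\bar{f}\circ q_f$ for the quotient map $q_f\colon M\rightarrow W_f$, and $q_f$ carries $S(f)$ diffeomorphically onto $\partial W_f$; under this diffeomorphism the restriction of $g:=\pi_{n,n-1}\circ\bar{f}$ to $\partial W_f$ becomes the fold map $\pi_{n,n-1}\circ f\mid_{S(f)}$. Since $\bar{f}$ is an immersion of the $n$-manifold $W_f$ into ${\mathbb{R}}^n$, its differential is fibrewise an isomorphism onto ${\mathbb{R}}^n$, so $g$ is a submersion on all of $W_f$. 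Consequently, for a value $y$ outside the image of the singular value set of $\pi_{n,n-1}\circ f\mid_{S(f)}$ --- exactly the values relevant to hypothesis (2) --- $g^{-1}(y)$ is a $1$-dimensional manifold with boundary, the boundary being $g^{-1}(y)\cap\partial W_f$ and the interior lying in $\mathrm{int}\,W_f$. Together with the assumed compactness, every connected component of $g^{-1}(y)$ is thus a compact connected $1$-manifold with boundary, hence diffeomorphic either to $S^1$ or to a closed interval.

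The key step is to exclude the circles. If a connected component $C$ of $g^{-1}(y)$ were a circle, then $C$ would have empty boundary, so $C\subset\mathrm{int}\,W_f$, and $\bar{f}(C)$ would be contained in the line ${\pi}_{n,n-1}^{-1}(y)=\{y\}\times{\mathbb{R}}\subset{\mathbb{R}}^{n-1}\times{\mathbb{R}}={\mathbb{R}}^n$. Writing $\bar{f}\mid_C=(y,h)$ with $h$ the last coordinate, the immersivity of $\bar{f}\mid_C$ --- inherited from that of $\bar{f}$ --- forces the derivative of $h$ to be nowhere zero, i.e. $h\colon C\rightarrow{\mathbb{R}}$ is an immersion of the circle $C$ into ${\mathbb{R}}$. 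This is impossible, since a nonconstant smooth function on the compact boundaryless manifold $S^1$ has a critical point. Hence every connected component of $g^{-1}(y)$ is a closed interval, in particular contractible, so hypothesis (2) of Proposition \ref{prop:7} holds and the conclusion --- that the composition is a normal spherical fold map --- follows.

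The only point I expect to require some care is the assertion that, for $y$ avoiding the fold values, $g^{-1}(y)$ is a genuine $1$-manifold with boundary rather than something singular along $\partial W_f$; but such a $y$ is a regular value of the equidimensional restriction $g\mid_{\partial W_f}$, since by hypothesis this restriction is a fold map and $y$ misses its singular values, so near the relevant boundary points the fibre of $g$ meets $\partial W_f$ transversally and the usual preimage theorem for manifolds with boundary applies. The remaining verifications are routine.
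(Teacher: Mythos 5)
Your proposal is correct and follows the route the paper intends: Corollary \ref{cor:1} is simply the case $n^{\prime}=n-1$ of Proposition \ref{prop:7}, and the paper (citing \cite{saekisuzuoka}) omits the only nontrivial verification, namely that contractibility of the compact components of $({\pi}_{n,n-1}\circ\bar{f})^{-1}(y)$ is automatic when the dimension drops by one. Your argument for that point --- these components are compact connected $1$-manifolds with boundary, and a circle component is impossible because $\bar{f}$ would immerse it into the line ${\pi}_{n,n-1}^{-1}(y)$, contradicting the existence of a critical point of a function on $S^1$ --- is sound, including the care taken at $\partial W_f$ where $y$ is a regular value of the equidimensional fold restriction.
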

\begin{Rem}
\label{rem:1}
Proposition \ref{prop:7} and Corollary \ref{cor:1} for the case where the source manifold is open is for the case where the map is obtained as the restriction of a cobordism of a special generic map to the interior of the compact manifold giving the cobordism for example. 
\end{Rem}
In this paper, conversely, we consider lifting normal spherical fold maps to special generic maps as mentioned in Section \ref{sec:1}. Before the presentation of a result, we review a fundamental result by \cite{saeki2}.

\begin{Prop}
\label{prop:8}
A closed manifold $M$ of dimension $m>2$ admits a special generic map into the plane if and only if $M$ is represented as the following.
\begin{enumerate}
\item In the case where $m$ is smaller than $7$, $M$ is represented as a connected sum of finite copies of $S^1 \times S^{m-1}$ and total spaces of non-orientable $S^{m-1}$-bundles over $S^1$.
\item  In the case where $m$ is not smaller than $7$, $M$ is represented as a connected sum of a finite number of manifolds represented as total spaces of trivial bundles whose fibers are homotopy spheres over $S^1$ or total spaces of non-orientable bundles whose fibers are homotopy spheres over $S^1$.
\end{enumerate}
\end{Prop}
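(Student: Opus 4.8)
The plan is to reduce everything to the structure theorem for special generic maps recalled in Example~\ref{ex:2}, specialised to target dimension $n=2$. The first point is that in this case the immersibility hypothesis there is automatic: a compact surface $P$ with $\partial P\neq\emptyset$ is a regular neighbourhood of a graph, and a graph can always be immersed in $\mathbb{R}^2$ with transverse crossings, so $P$ immerses in $\mathbb{R}^2$ (equivalently, $P$ is parallelisable and one may apply the Hirsch immersion theorem). Hence $M$ admits a special generic map into $\mathbb{R}^2$ if and only if there are a compact surface $P$ with non-empty boundary, an $S^{m-2}$-bundle $E\to P$, and a linear $D^{m-1}$-bundle over $\partial P$, such that $M$ is $E$ glued to that disc bundle by a bundle isomorphism of the two $S^{m-2}$-bundles over $\partial P$.

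For the ``if'' direction I would construct the maps directly. Take $P$ an annulus $S^1\times[0,1]$ (or a M\"obius band), put the trivial $S^{m-2}$-bundle over it, and cap the two boundary circles by linear $D^{m-1}$-bundles; since $[0,1]\times S^{m-2}$ capped at both ends by $D^{m-1}$ is an $(m-1)$-sphere, the model of Example~\ref{ex:2} yields a special generic map whose source is $S^1\times S^{m-1}$. Performing one of the cappings with a twist $\phi\in\mathrm{Diff}(S^{m-2})$ gives $S^1\times(D^{m-1}\cup_{\phi}D^{m-1})$, i.e. $S^1\times\Sigma^{m-1}$ for an arbitrary homotopy sphere $\Sigma^{m-1}$, and using a non-orientable disc bundle (or the M\"obius band for $P$) gives the corresponding non-orientable bundle over $S^1$; for $m<7$ the twisted sphere $D^{m-1}\cup_{\phi}D^{m-1}$ is always the standard $S^{m-1}$ (in the only delicate case $m-1=4$ because $\phi\in\mathrm{Diff}(S^3)$ is isotopic to an orthogonal map by Cerf), recovering the two building blocks listed there. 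Finally I would check that connected sum is compatible with this picture: the boundary connected sum $P_1\natural P_2$ of two Reeb surfaces, with the bundle data joined along the identified boundary arcs, is the Reeb space of a special generic map on $M_1\# M_2$, the standard map on $S^m$ (Reeb space $D^2$) being a neutral element since $D^2\natural P=P$. Iterating over the summands in the statement finishes this direction.

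For the ``only if'' direction I would start from the structure above and fix a handle decomposition of $P$ with one $0$-handle and finitely many $1$-handles (no $2$-handles, as $\partial P\neq\emptyset$). The $0$-handle alone produces $S^m$, because $D^2\times S^{m-2}$ glued to a disc bundle over $S^1$ is $\partial(D^2\times D^{m-1})=S^m$. Attaching a $1$-handle to $P$ modifies $\partial P$ by a surgery supported in a disc, hence modifies $M$ inside a ball, so it changes $M$ by connected sum with the closed manifold built over ``a disc with that single handle''; that manifold is a bundle over the core circle of the handle whose fibre is a twisted sphere $D^{m-1}\cup_{\phi}D^{m-1}$ and whose total space is orientable or not according to the attaching map. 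Running over the $1$-handles, $M$ is a connected sum of $S^m$ with one such factor per handle; the $S^m$ is absorbed, and for $m<7$ each factor is $S^1\times S^{m-1}$ or a non-orientable $S^{m-1}$-bundle over $S^1$ (the fibres being standard there), while for $m\geq 7$ the factors are recorded as trivial or non-orientable bundles over $S^1$ with homotopy-sphere fibre. Together with the ``if'' direction this gives the equivalence.

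The main obstacle, and the heaviest technical point, is the precise identification of the $1$-handle factors: one must show that the clutching datum of the $S^{m-2}$-bundle over a $1$-handle, together with the linear disc-bundle data over the attaching arcs, can always be normalised so that the monodromy of the resulting $S^{m-1}$-fibre bundle over the core circle is isotopic to the identity or to a reflection --- so that no orientation-preserving but non-trivial mapping torus survives --- because the ``exotic'' part of the $S^{m-2}$-clutching is conjugate to, hence cancels against, the part arising from the disc bundles. For $m\geq 7$ this normalisation, and the classification of the factors up to diffeomorphism, is where one invokes high-dimensional tools (the $h$-cobordism theorem and smoothing theory), available because $m-2\geq 5$; for $m<7$ one instead uses low-dimensional input about diffeomorphism groups of spheres of dimension at most $4$ and about homotopy spheres of dimension at most $6$, which is precisely why the statement splits at $m=7$, i.e. at $m-2=5$. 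A secondary point requiring care is that attaching handles to $P$ and performing the corresponding surgeries on $M$ genuinely commute, so that the boundary-connected-sum decomposition of $P$ matches the connected-sum decomposition of $M$; see \cite{saeki2}.
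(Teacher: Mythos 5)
First, note that the paper does not prove Proposition~\ref{prop:8} at all: it is quoted verbatim from \cite{saeki2} (Saeki, 1993) as a known classification, so there is no in-paper argument to compare yours against. Your sketch does follow the architecture of Saeki's original proof --- reduce to the structure theorem of Example~\ref{ex:2}~(\ref{ex:2.2}) with $n=2$, observe that every compact surface with boundary immerses in the plane, realize connected sums by boundary connected sums of Reeb surfaces for the ``if'' direction, and decompose $P$ into a $0$-handle and $1$-handles for the ``only if'' direction, with each $1$-handle contributing a sphere-bundle-over-$S^1$ summand. That outline is sound.

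The genuine gap is precisely the step you flag as ``the main obstacle'' and then do not carry out: showing that the orientable summands produced by the $1$-handles are \emph{trivial} bundles $S^1\times\Sigma^{m-1}$ rather than nontrivial orientation-preserving mapping tori. The bundle over the core circle of a $1$-handle has monodromy assembled from a loop of gluing diffeomorphisms of the $S^{m-2}$-fibre and from the linear structure of the two $D^{m-1}$-bundles, and the assertion that this monodromy can always be normalised to the identity (or to a reflection in the non-orientable case) is exactly the content of the theorem; appealing generically to ``the $h$-cobordism theorem and smoothing theory'' does not establish it. The actual argument needs the Cerf-type statement (cf.\ Proposition~\ref{prop:4}) that a diffeomorphism of a homotopy sphere preserving a special generic function up to isotopy is a suspension of a diffeomorphism of the equatorial sphere, together with an analysis of which suspensions are isotopic to the identity. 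A second, unaddressed point: your connected-sum decomposition cuts $M$ along the preimage of a properly embedded arc in $P$, but that preimage is a \emph{twisted} $(m-1)$-sphere (two discs from the linear bundle glued to $\mathrm{arc}\times S^{m-2}$) and may be exotic for $m\geq 7$; splitting off a summand as an honest connected sum requires either choosing the arcs so these preimages are standard or justifying the generalized connected sum. Until these two points are supplied, the ``only if'' direction is not proved.
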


We have the following.
\begin{Thm}
\label{thm:3}
\begin{enumerate}
\item
\label{thm:3.1}
 Every normal spherical Morse function on a closed manifold of dimension $m>1$ and $m \neq 5$ can be lifted to a special generic map into the plane.
\item
\label{thm:3.2}
 Every normal standard-spherical Morse function on a closed manifold of dimension $m>1$ and $m \neq 5$ can be lifted to
 a special generic map into ${\mathbb{R}}^n$ satisfying $m \geq n \geq 2$ if $m \leq 6$ holds or $m \geq 7$ holds and the Gromoll filtration number of every diffeomorphism on $D^{m-1}$ fixing all the points in the boundary is larger
 than $n-1$. 
\end{enumerate}
\end{Thm}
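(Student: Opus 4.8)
The plan is to build the lift $f_1\colon M\to{\mathbb R}^{n}$ directly over the Reeb graph $W_f$, reading the ``cobordism of special generic maps'' picture of FIGURE \ref{fig:3} and \cite{saeki3}, \cite{sadykov} in reverse. Since we require $f=\pi_{n,1}\circ f_1$, for a regular value $t$ the restriction of $f_1$ to the disjoint union of homotopy $(m-1)$-spheres $f^{-1}(t)$ should be a disjoint union of standard special generic maps into $\{t\}\times{\mathbb R}^{n-1}$, and as $t$ varies over ${\mathbb R}$ these must sweep out $M$. First I would record the combinatorial decomposition of $f$ coming from normality (Definition \ref{def:3}) and Example \ref{ex:2}: $W_f$ is a finite graph whose degree-$1$ vertices have preimage the standard disc $D^m$ with $f\mid_{D^m}$ a Morse function with one (index-$0$) singular point, whose degree-$3$ vertices have preimage a manifold PL homeomorphic to $S^m$ with the interiors of three disjoint discs removed, with $f$ the corresponding ``pair of pants'' function (an index-$1$ singular point merging two fibre spheres into one, or conversely), and over whose edges $f$ restricts to a trivial bundle $\Sigma^{m-1}\times I\to I$ with $\Sigma^{m-1}$ a homotopy $(m-1)$-sphere --- the standard sphere $S^{m-1}$ in the standard-spherical case of part (\ref{thm:3.2}).

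Next I would fix a building-block lift over each piece, each landing over the corresponding subinterval of ${\mathbb R}$. Over an edge I take $(\mathrm{pr}_I,\ g_0\circ\mathrm{pr}_{\Sigma^{m-1}})\colon\Sigma^{m-1}\times I\to I\times{\mathbb R}^{n-1}$, where $g_0\colon\Sigma^{m-1}\to{\mathbb R}^{n-1}$ is a fixed standard special generic map on the fibre: the canonical projection $\pi_{m,n-1}\mid_{S^{m-1}}$ in the standard-spherical case, and in general a Morse function with two singular points followed, if $n>2$, by a further canonical projection, which exists by Example \ref{ex:3} and the remarks after Definition \ref{def:5}. Over a degree-$1$ vertex I use a ``cap'' of a standard special generic map --- a hemisphere of $\pi_{m+1,n}\mid_{S^m}$ for the unit sphere $S^m\subset{\mathbb R}^{m+1}$, reparametrized so that its first target coordinate becomes $f\mid_{D^m}$ --- which restricts on $\partial D^m$ to $g_0$ up to a diffeomorphism of the boundary sphere. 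Over a degree-$3$ vertex I realize the pair-of-pants cobordism by the $1$-handle model inside the special generic category underlying the cobordisms of \cite{saeki3} (compare FIGURE \ref{fig:2} and the proof of Lemma \ref{lem:1}), producing a special generic map on the thrice-punctured $m$-sphere whose restriction to each of its three boundary spheres is $g_0$ up to a diffeomorphism; in the non-standard-spherical case this also uses that the thrice-punctured sphere is PL standard and that every homotopy sphere carries a standard special generic function (Example \ref{ex:3}).

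It then remains to glue the blocks along the trivial bundles over the edges. The only obstruction is that the transition data are diffeomorphisms $\phi$ of the fibre sphere, and to keep the assembled map special generic one needs $g_0\circ\phi=g_0$; replacing $\phi$ by an isotopic diffeomorphism changes $M$ only by an ambient isotopy, so it does no harm. In part (\ref{thm:3.2}), where the fibre is $S^{m-1}$ and $g_0=\pi_{m,n-1}\mid_{S^{m-1}}$, this is exactly Proposition \ref{prop:6} with $k=m-1$ and $l=n-1$: the hypothesis $m\neq5$ forces $k\neq4$, and the dichotomy ``$m\leq6$, or $m\geq7$ with the Gromoll filtration number of every diffeomorphism of $D^{m-1}$ fixing the boundary larger than $n-1$'' is precisely the hypothesis of Proposition \ref{prop:6} (see also Example \ref{ex:4}). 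In part (\ref{thm:3.1}), where $n=2$ and $g_0$ is a special generic function on a homotopy $(m-1)$-sphere, orientation-preserving $\phi$ are absorbed by Proposition \ref{prop:4} and orientation-reversing ones by writing them as the composition of a fixed $g_0$-preserving reflection-type diffeomorphism with an orientation-preserving one, as in the proof of Proposition \ref{prop:6}; the restriction $m\neq5$ enters through Proposition \ref{prop:4}, and $m=6$ causes no difficulty since there are no exotic $5$-spheres and every orientation-preserving diffeomorphism of $S^5$ is isotopic to the identity. Finally one checks that the assembled $f_1\colon M\to{\mathbb R}^n$ has only index-$0$ fold singularities, hence is special generic, and that $\pi_{n,1}\circ f_1=f$ by construction.

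I expect the degree-$3$ block together with the gluing to be the main obstacle: realizing the pair-of-pants cobordism inside the special generic category while keeping the first coordinate equal to $f$, and at the same time arranging every transition diffeomorphism of the fibre spheres to commute with $g_0$. This is exactly why Proposition \ref{prop:6} is singled out as the key input, and why the dimension hypotheses and the Gromoll filtration appear in the statement; in part (\ref{thm:3.1}) there is the additional point of checking that the possibly exotic smooth structures on the fibre spheres and on the thrice-punctured spheres imposed by $f$ are compatible with the standard special generic structures used in the blocks, which is where Cerf's theory, via Proposition \ref{prop:4}, does the work.
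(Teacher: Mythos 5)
Your proposal is correct and follows essentially the same route as the paper: decompose $M$ over the Reeb graph into caps, pair-of-pants cobordisms of (standard) special generic maps as in FIGURE \ref{fig:3}, and product pieces over the edges, then resolve the gluing diffeomorphisms of the fibre spheres via Proposition \ref{prop:4} for part (\ref{thm:3.1}) and Proposition \ref{prop:6} for part (\ref{thm:3.2}). Your explicit handling of the orientation-reversing transition diffeomorphisms and of the case $m=6$ only spells out details the paper leaves implicit.
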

\begin{proof}
First we consider general integers satisfying $m \geq n \geq 2$. In the case where inverse images of regular values are always disjoint unions of standard spheres there, the inverse image of a small closed neighborhood of a point of the Reeb space whose inverse image contains a singular point can be lifted to a cobordism of special generic functions as presented and to a cobordism of standard special generic maps
 into ${\mathbb{R}}^{n-1}$ so that all the special generic maps on the boundaries are all regarded as the canonical projections of unit spheres in ${\mathbb{R}}^{m}$ into ${\mathbb{R}}^{n-1}$ as presented in FIGURE \ref{fig:3}, which is stated also in the presentation of FIGURE \ref{fig:1}, FIGURE \ref{fig:2} and FIGURE \ref{fig:3} before.
Even in the case where there exists a connected component of a regular value not diffeomorphic to the standard sphere, we can lift such a local function to a cobordism of special generic functions same as the previous one presented as FIGURE \ref{fig:3} in the PL (topology) category (see also \cite{saeki3} for example for precise theory). 
Note that in constructing lifts in these cases, Lemma \ref{lem:1} plays important roles and Proposition \ref{prop:7} (Corollary \ref{cor:1}) and Remark \ref{rem:1} for proper maps are also important .

We prove the former statement first.

We lift the inverse images of subsets
 PL homeomorphic to closed intervals consisting of points being not singular to products of the special generic functions and the identity maps on closed intervals.
We can glue them so that we can obtain a special generic map being a lift of the original function by virtue of Cerf's theory or Proposition \ref{prop:4}: see also FIGURE \ref{fig:5}.

The proof of the latter one is similar to the proof of the former one. We lift the inverse image of a small closed neighborhood of a point of the Reeb space whose inverse image contains a singular point to a
 cobordism of standard special generic maps as mentioned. We lift the inverse images of subsets
 PL homeomorphic to closed intervals consisting of points being not singular to products of the special generic maps regarded as canonical projections of unit spheres and the identity maps on closed intervals as a higher dimensional version of the discussion before.
 We can glue the lifts by virtue of Proposition \ref{prop:6} to obtain a desired lift: see also FIGURE \ref{fig:5}. 

This completes the proof.

\begin{figure}
\includegraphics[width=40mm]{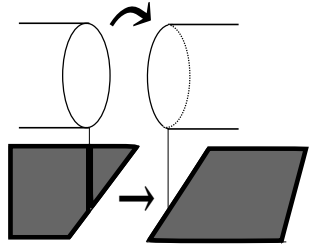}
\caption{Gluing given two special generic functions or standard special generic maps on homotopy spheres appearing in the inverse images of regular values.}
\label{fig:5}
\end{figure}
\end{proof}

\begin{Ex}
\label{ex:5}
\begin{enumerate}
\item FIGURE \ref{fig:6} represents lifts of explicit normal spherical Morse functions to immersions (special generic maps)

\begin{figure}
\includegraphics[width=40mm]{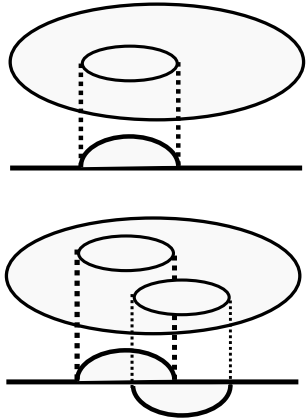}
\caption{Lifts of explicit normal spherical Morse functions to immersions (special generic maps).}
\label{fig:6}
\end{figure} 
\item FIGURE \ref{fig:7} represents a lift of an explicit normal spherical Morse function to an immersion (a special generic map); this shows an example such that distinct connected components of the inverse image of a regular value may intersect in the target space where the lift is a special generic map into the plane. 
\begin{figure}
\includegraphics[width=40mm]{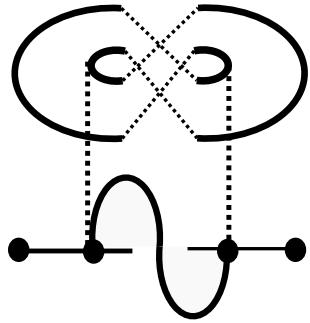}
\caption{A lift of an explicit normal spherical Morse function to an immersion (a special generic maps).}
\label{fig:7}
\end{figure} 
\end{enumerate}
\end{Ex}

\begin{Ex}
\label{ex:6}
According to \cite{crowleyschick} for example, the number of connected components of the orientation preserving diffeomorphism group of $S^{11}$ and the group of all the diffeomorphisms on $D^{11}$ fixing all the points in the boundary are both $1$. We can
 apply Theorem \ref{thm:3} (\ref{thm:3.2}) for $1 \leq n \leq 12$.
 Moreover, for every orientation preserving diffeomorphism of $D^{12}$, the Gromoll filtration number is always larger than $3$. The number of connected components of the orientation preserving diffeomorphism group of $S^{12}$ and the group of all the diffeomorphisms on $D^{12}$ fixing all the points in the boundary are both $3$.  We can
 apply Theorem \ref{thm:3} (\ref{thm:3.2}) for $1 \leq n \leq 4$ and by the fact before, we can drop the assumption "standard-spherical" or in this case, "spherical" means "standard-spherical". 
 \end{Ex}

\begin{Rem}
\label{rem:2}
The attachments in the proof of the former statement are also done in \cite{saeki2} to construct special generic maps into the plane on several closed manifolds. However, only some orientation
 reversing diffeomorphisms are considered, and projections of the resulting special generic maps to $\mathbb{R}$ are not considered. A similar method used in the latter statement is a general
 explicit case of the attachements Wrazidlo \cite{wrazidlo} performed to construct standard special generic maps on homotopy spheres based on Proposition \ref{prop:5}.
\end{Rem}

We have the following stating that some of normal spherical Morse functions cannot be lifted to special generic maps into some Euclidean spaces.

\begin{Thm}
\label{thm:4}
If for a normal spherical Morse function on a closed manifold of dimension $m>1$, there exists an inverse image of a regular value containing a homotopy sphere such that $n-1>0$ is not in the special generic dimension set of the sphere, then it can not be lifted to a special generic map into $\mathbb{R}^n$. 
 In addition, if for a normal spherical Morse function, there exists an inverse image of a regular value containing a homotopy sphere such that the fold perfection
 is smaller than $3$, then it cannot be lifted to a special generic
 map into ${\mathbb{R}}^4$.
\end{Thm}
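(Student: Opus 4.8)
The plan is to argue by contradiction, recovering a special generic map on the relevant homotopy sphere by slicing a putative lift. So suppose a normal spherical Morse function $f:M\to\mathbb{R}$ on a closed manifold of dimension $m>1$ admits a lift to a special generic map $F:M\to{\mathbb{R}}^n$, that is $f={\pi}_{n,1}\circ F$. Fix a regular value $y$ of $f$ whose inverse image contains a homotopy sphere $\Sigma$ of dimension $m-1$, and put $L:={\pi}_{n,1}^{-1}(y)\subset{\mathbb{R}}^n$, an affine hyperplane. Since $f^{-1}(y)=({\pi}_{n,1}\circ F)^{-1}(y)=F^{-1}(L)$, the sphere $\Sigma$ is one of the connected components of $F^{-1}(L)$, so it suffices to show that $F$ restricted over $L$ is a special generic map: then $F\mid_{\Sigma}:\Sigma\to L\cong{\mathbb{R}}^{n-1}$ is special generic, hence $n-1$ lies in the special generic dimension set of $\Sigma$, and the first assertion follows by contraposition (if $m<n$ there is no special generic map $M\to{\mathbb{R}}^n$ at all, so the statement is immediate there).

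First I would record the elementary equivalence that $y$ is a regular value of $f={\pi}_{n,1}\circ F$ if and only if $L$ is transverse to $F$: for $p\in F^{-1}(L)$ one has $T_{F(p)}L=\ker d({\pi}_{n,1})_{F(p)}$, so $dF_p(T_pM)+T_{F(p)}L={\mathbb{R}}^n$ precisely when $d({\pi}_{n,1}\circ F)_p=df_p\neq 0$. The substantive point is then the following, which I would isolate as a lemma (it is essentially the operation inverse to Proposition \ref{prop:7} and Corollary \ref{cor:1}): the restriction of a special generic map over an affine subspace of the target transverse to it is again a special generic map. For our $L$ this is verified by writing $F$ at an index-$0$ fold point $p$ in the normal form $(x_1,\dots,x_m)\mapsto(x_1,\dots,x_{n-1},\sum_{k\geq n}x_k^2)$ and $L$ locally as $\{h=0\}$; transversality at $p$ forces $\partial h/\partial y_i\neq 0$ at $F(p)$ for some $i\leq n-1$, and solving for that coordinate exhibits $F\mid_{F^{-1}(L)}$ near $p$ again in the special generic normal form, now with source dimension $m-1$ and target dimension $n-1$, while at regular points of $F$ the restriction is a submersion by transversality. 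Granting the lemma, $F\mid_{F^{-1}(L)}:F^{-1}(L)\to L\cong{\mathbb{R}}^{n-1}$ is special generic on the closed manifold $F^{-1}(L)=f^{-1}(y)$, hence so is its restriction to the component $\Sigma$, completing the first part.

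For the additional assertion I would apply the above with $n=4$: a lift of $f$ to a special generic map into ${\mathbb{R}}^4$ would produce a special generic map $g:\Sigma\to{\mathbb{R}}^3$ on the homotopy sphere $\Sigma$. By Example \ref{ex:2.2} the Reeb space $W_g$ is a compact $3$-manifold with non-empty boundary, and it is contractible because $\Sigma$ is a homotopy sphere. A short homological argument then forces $W_g\cong D^3$: Poincar\'e--Lefschetz duality together with the long exact sequence of $(W_g,\partial W_g)$ shows $\partial W_g$ is connected with vanishing $H_1$, hence $\partial W_g\cong S^2$; capping off with a $3$-ball yields a simply connected closed $3$-manifold, which is $S^3$ by the three-dimensional Poincar\'e conjecture, and Alexander's theorem then identifies $W_g$ with a $3$-ball. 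Thus $g$ is a standard special generic map in the sense of Definition \ref{def:5}, so the fold perfection of $\Sigma$ is at least $3$, contradicting the hypothesis; contraposition finishes the proof (the cases $\dim\Sigma\leq 3$ are either vacuous, since the fold perfection of $S^2$, $S^1$ handling is already covered, or reduce to the first part).

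The main obstacle is precisely the restriction lemma: one has to check that the transversality hypothesis, which is only controlled through the first coordinate of ${\mathbb{R}}^n$, is compatible with the local normal form of $F$ at every singular point, so that the sliced map remains fold of index $0$ everywhere and the singular set is exactly $S(F)\cap F^{-1}(L)$. Once that local computation is carried out, the remainder is formal bookkeeping with Reeb spaces together with classical low-dimensional topology.
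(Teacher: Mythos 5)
Your proposal is correct and follows essentially the same route as the paper: restrict the putative lift over the hyperplane $\pi_{n,1}^{-1}(y)$ to obtain a special generic map of the homotopy sphere into ${\mathbb{R}}^{n-1}$, and for the second assertion observe that a special generic map of a homotopy sphere into ${\mathbb{R}}^3$ has contractible Reeb space, hence Reeb space $D^3$, hence is standard, contradicting fold perfection $<3$. The only difference is that you supply details the paper leaves implicit (the transversality/normal-form verification that the sliced map is again special generic, and the identification of the contractible compact $3$-manifold with $D^3$), and these details check out.
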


%

\begin{proof}[Proof of Theorem \ref{thm:4}]
For a normal spherical Morse function, let there exist an inverse image of a regular value containing a homotopy sphere such that $n-1>0$ is not in the special generic dimension set of the sphere. We assume that it can be lifted to a special generic map into ${\mathbb{R}}^n$. Consider the restriction of the special generic map to the inverse image of a regular value of the Morse function. It is regarded as a special generic map into ${\mathbb{R}}^{n-1}$. This contradicts the assumption. 
We show the latter. Let there exist an inverse image of a regular value containing a homotopy sphere such that $3$ is larger than the fold perfection of the sphere. This means that the homotopy sphere admits a standard special generic map into ${\mathbb{R}}^{3}$, which is a contradiction. Such a homotopy sphere does not admit a special generic map into ${\mathbb{R}}^3$; if it admits a special generic map into ${\mathbb{R}}^3$, then the Reeb space is a contractible $3$-dimensional compact manifold and it is a $3$-dimensional standard closed disc. This completes the proof.
\end{proof}
\begin{Ex}
\label{ex:7}
According to \cite{saeki2}, for any standard sphere, the fold perfection is equal to the dimension of the sphere and for any homotopy sphere of dimension $k>6$ not diffeomorphic to $S^k$, the fold perfection must be smaller than $k-3$. Thus we can easily construct a normal spherical Morse function on manifold of dimension $m>7$ which cannot be lifted to a special generic map into ${\mathbb{R}}^{m-3}$. 
According to \cite{wrazidlo}, there are at least $14$ oriented diffeomorphism types of $7$-dimensional oriented homotopy spheres such that the fold perfections are $2$. Thus we can easily construct a normal spherical Morse function on an $8$-dimensional manifold which cannot be lifted to a special generic map into ${\mathbb{R}}^4$. For example, in the first figure of FIGURE \ref{fig:6}, we can construct a spherical fold map whose Reeb space is as presented such that the inverse image of a regular value in the center is a disjoint union of two copies of such an exotic homotopy sphere $\Sigma$; $S^1 \times \Sigma$ admits such a function and it is also obtained by projecting a special generic map into the plane regarded as the product of the identity map on the circle and a special generic function on the homotopy sphere (implicitly appearing as special generic maps into the plane in Proposition \ref{prop:8}) canonically by applying Proposition \ref{prop:7}. 
\end{Ex}

As a key proposition, we introduce a result in \cite{saekitakase}.
\begin{Prop}[\cite{saekitakase}]
\label{prop:9}
\begin{enumerate}
\item A special generic map on a closed manifold into the plane can be lifted to a codimension $1$ immersion if the source manifold is orientable.
\item A special generic map on a non-orientable closed manifold into the plane can be lifted to a codimension $1$ immersion if and only if the 
difference $m-2$ between the dimension of the source manifold and that of the target manifold is $0$, $2$ or $6$ and the normal bundle of the singular
 set is orientable and trivial.
\item A special generic map on a closed manifold of dimension $m>1$ into the plane can be lifted to a codimension $1$ embedding if and only if the source manifold is represented as the connected sum of a finite number of $S^1 \times S^{m-1}$.
\end{enumerate}
\end{Prop}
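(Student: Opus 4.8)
This is the core technical result of \cite{saekitakase}, and I indicate how the argument proceeds within the set-up above. Fix a special generic map $f\colon M^{m}\to{\mathbb R}^{2}$; by the structure theorem recalled in Example \ref{ex:2.2}, its Reeb space $W_{f}=P$ is a compact surface with $\partial P\neq\emptyset$ carrying an immersion $\bar f\colon P\to{\mathbb R}^{2}$ with $f=\bar f\circ q_{f}$, the map $q_{f}$ is an $S^{m-2}$-bundle over a copy $P'$ of $P$ obtained by deleting an open collar of $\partial P$, and over that collar it is the fibrewise squared-norm function of a linear $D^{m-1}$-bundle $\nu$ over $\partial P\cong S(f)$. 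Lifting $f$ to a codimension $1$ immersion (resp. embedding) means producing an immersion (resp. embedding) $\tilde f\colon M\to{\mathbb R}^{m+1}={\mathbb R}^{2}\times{\mathbb R}^{m-1}$ whose first two coordinates recover $f$, so only the ${\mathbb R}^{m-1}$-part is to be built. For (1) and the sufficiency in (2), I would construct $\tilde f$ in two pieces. Over $P'$ one lifts the $S^{m-2}$-bundle to a fibrewise immersion into $P'\times{\mathbb R}^{m-1}$, i.e. a family of immersions $S^{m-2}\to{\mathbb R}^{m-1}$ glued by the bundle's clutching data; since $P'$ is homotopy equivalent to a wedge of circles, this reduces, for each generating loop with monodromy $\phi\in\pi_{0}\,\mathrm{Diff}(S^{m-2})$, to connecting the standard embedding $i_{0}$ to $i_{0}\circ\phi$ by a regular homotopy, which Proposition \ref{prop:2} provides when $\phi$ is orientation preserving and, when $m-2\in\{0,2,6\}$, for every $\phi$ --- and orientability of $M$ (resp. the hypotheses of (2)) is precisely what keeps the clutching data orientation preserving. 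Arranging the family to be $i_{0}$ fibrewise over a collar of each boundary circle of $P'$, one extends $\tilde f$ over the $D^{m-1}$-bundle using triviality of $\nu$ over the circles $S(f)$ (automatic from orientability of $M$, hypothesised in (2)): the ${\mathbb R}^{m-1}$-part is taken fibrewise as the inclusion $D^{m-1}\hookrightarrow{\mathbb R}^{m-1}$, whose graph over the squared-norm coordinate matches the boundary sphere picture, and a check of differentials (injective along $P$ via $d\bar f$, injective along the fibres, the fold direction absorbed by the squared-norm coordinate) shows $\tilde f$ is an immersion.

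For the necessity in (2), suppose $M$ is non-orientable and such a $\tilde f$ exists. Near a point of $S(f)$ the map $f$ has the form $(x_{1},\dots,x_{m})\mapsto(x_{1},\sum_{k\ge 2}x_{k}^{2})$, so $\tilde f$ has the shape $(x_{1},\dots,x_{m})\mapsto(x_{1},\sum_{k\ge 2}x_{k}^{2},g)$; immersivity at the singular point forces the differential of $g$ in the $x_{2},\dots,x_{m}$ directions to be an isomorphism, hence fibrewise $\tilde f$ restricts to an embedding $D^{m-1}\hookrightarrow{\mathbb R}^{m-1}$. Globalising over the circles $S(f)$, the monodromy of $\nu$ is realised by a loop in $\mathrm{Emb}(D^{m-1},{\mathbb R}^{m-1})\simeq{\mathbb R}^{m-1}\times GL(m-1,{\mathbb R})$, hence is orientation preserving, so $\nu$ is orientable, and an orientable bundle over circles is trivial. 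With $\nu$ thus tame, the non-orientability of $M$ sits in the $S^{m-2}$-bundle over $P'$, so some clutching map $\phi$ there is an orientation-reversing diffeomorphism of $S^{m-2}$ and $\tilde f$ yields an immersion $\iota\colon S^{m-2}\to{\mathbb R}^{m-1}$ regularly homotopic to $\iota\circ\phi$; by the sharpness of Proposition \ref{prop:2} (only $S^{0},S^{2},S^{6}$ can be turned inside out by a regular homotopy) this forces $m-2\in\{0,2,6\}$.

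For (3), the ``if'' direction: for each summand $S^{1}\times S^{m-1}$ take the special generic map $\mathrm{id}_{S^{1}}\times h$ for a height function $h$, compose with an embedding of the annulus $S^{1}\times{\mathbb R}$ into the plane, lift by (1), and upgrade to an embedding via Proposition \ref{prop:3} applied fibrewise (isotopies rather than regular homotopies, $\nu$ being trivial); realising the connected sums as boundary connect sums of the corresponding unknotted solid tori $S^{1}\times D^{m}\subset{\mathbb R}^{m+1}$ yields the embedded lift. For the ``only if'' direction, a codimension $1$ embedding realises $M$ as a closed hypersurface in ${\mathbb R}^{m+1}$, so $M$ is orientable; combining this with the classification of closed manifolds admitting a special generic map into the plane (Proposition \ref{prop:8} for $m>2$, the classical surface case for $m=2$) eliminates the non-orientable bundle summands, and a final check that the embedded lift forces the homotopy-sphere summands allowed by Proposition \ref{prop:8}(2) to be standard leaves exactly $M\cong\#^{r}(S^{1}\times S^{m-1})$.

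The main obstacle is the necessity in (2): one must squeeze the bundle-theoretic conditions $m-2\in\{0,2,6\}$ and ``$\nu$ orientable and trivial'' out of the bare existence of \emph{some} codimension $1$ immersion lifting $f$, which requires both the precise local normal form along $S(f)$ and the non-existence half of Proposition \ref{prop:2}; the rigidity needed to exclude exotic summands in the ``only if'' direction of (3) is of the same nature.
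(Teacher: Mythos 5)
The first thing to say is that the paper itself offers no proof of Proposition \ref{prop:9}: it is stated as a quoted result of \cite{saekitakase} and used as a black box in Theorems \ref{thm:5} and \ref{thm:6}, so there is no in-paper argument to measure yours against. Judged on its own, your reconstruction runs along the same lines as the machinery the paper does use elsewhere (the structure theorem of Example \ref{ex:2} (\ref{ex:2.2}), fibrewise regular homotopies of sphere immersions via Proposition \ref{prop:2}, isotopies via Proposition \ref{prop:3}), and the parts you make explicit are sound: in particular the local computation at a fold point, showing that any codimension $1$ immersion lift restricts on the normal directions to a fibrewise isomorphism onto ${\mathbb R}^{m-1}$ and hence trivializes the normal bundle of $S(f)$, is exactly the right mechanism for the necessity of ``$\nu$ orientable and trivial'' in (2).

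Two places remain genuine gaps rather than compressed proofs. First, in the necessity half of (2), to extract $m-2\in\{0,2,6\}$ from ``$\iota$ is regularly homotopic to $\iota\circ\phi$ for some orientation-reversing $\phi$'' you must first identify the regular homotopy class of the fibre immersion $\iota$ with that of the canonical embedding $i_0$; this does follow from your own normal form (near $S(f)$ each fibre sphere bounds an embedded disc in the hyperplane, so $\iota$ is regularly homotopic to $i_0$ there), but without saying so the step is incomplete, since a priori some other regular homotopy class could be reversible even when $i_0$ is not. Second, in (3) your ``if'' direction constructs an embedded lift of one particular special generic map on $\#^r(S^1\times S^{m-1})$ (the spun one), whereas the statement concerns the given map $f$, whose Reeb space can be an arbitrary planar compact surface with boundary; and your ``only if'' direction ends with an unargued ``final check'' that the homotopy-sphere summands permitted by Proposition \ref{prop:8} are standard. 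Orientability of a closed hypersurface of ${\mathbb R}^{m+1}$ is indeed immediate, but ruling out exotic summands is the delicate part of the Saeki--Takase argument and needs an actual mechanism (for instance, that the half-space preimages of the embedded lift furnish the relevant homotopy spheres as boundaries of contractible, hence disc, pieces); as written this is an assertion, not a proof.
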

Using this together with Theorem \ref{thm:1} and Theorem \ref{thm:3}, we have the following.
\begin{Thm}
\label{thm:5}
\begin{enumerate}
\item A normal spherical Morse function on a closed orientable manifold of dimension can be lifted to an immersion of codimension $1$.
\item
For any codimension $1$ immersion obtained as a lift of the original normal {\rm (}standard-{\rm )}spherical Morse function on a closed non-orientable manifold of dimension $m=3,7$ and  in Theorem \ref{thm:1}, if we compose it with ${\pi}_{m+1,2}$, then the resulting
 map never be special generic.
\item Every normal spherical Morse function on a closed manifold of dimension $m>1$ represented as the connected sum of a finite number of $S^1 \times S^{m-1}$ can be lifted to an embedding of codimension $1$.
\end{enumerate}
\end{Thm}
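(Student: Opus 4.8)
The plan is to derive all three statements by stacking lifts that have already been produced in the paper and invoking the characterisations of Proposition~\ref{prop:9}, using throughout the identity ${\pi}_{2,1}\circ{\pi}_{m+1,2}={\pi}_{m+1,1}$: a lift $g$ of $f$ to $\mathbb{R}^2$ followed by a lift $h$ of $g$ to $\mathbb{R}^{m+1}$ is automatically a lift of $f$. For (1), let $f\colon M\to\mathbb{R}$ be a normal spherical Morse function with $M$ closed and orientable of dimension $m$. If $f$ is standard-spherical, Theorem~\ref{thm:1} already furnishes a codimension~$1$ immersion lifting $f$; in the general spherical case (for $m>1$ and $m\neq 5$, so that Theorem~\ref{thm:3} applies) Theorem~\ref{thm:3}~(\ref{thm:3.1}) produces a special generic map $g\colon M\to\mathbb{R}^2$ with ${\pi}_{2,1}\circ g=f$, and since $M$ is orientable Proposition~\ref{prop:9}~(1) produces a codimension~$1$ immersion $h\colon M\to\mathbb{R}^{m+1}$ with ${\pi}_{m+1,2}\circ h=g$; then ${\pi}_{m+1,1}\circ h=f$. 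The residual cases $m=1$, and $m=5$ when $f$ is standard-spherical, are covered directly by Theorem~\ref{thm:1}.

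For (2), let $M$ be closed and non-orientable with $m=3$ or $m=7$, and let $f$ be a normal standard-spherical Morse function on $M$. By Theorem~\ref{thm:1}, whose immersion statement does not require orientability when $m=1,3,7$, there is a codimension~$1$ immersion $h\colon M\to\mathbb{R}^{m+1}$ with ${\pi}_{m+1,1}\circ h=f$; set $g:={\pi}_{m+1,2}\circ h\colon M\to\mathbb{R}^2$. Suppose, for contradiction, that $g$ is special generic. Then $g$ is a proper special generic map from a non-orientable closed manifold into the plane, and $h$, which satisfies ${\pi}_{m+1,2}\circ h=g$, is a codimension~$1$ immersion lifting $g$. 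By the ``only if'' direction of Proposition~\ref{prop:9}~(2) this would force $m-2\in\{0,2,6\}$, contradicting $m-2=1$ or $m-2=5$. Hence $g$ is never special generic. The argument uses only that $h$ is a codimension~$1$ immersion lifting $f$, so it applies to every such $h$, in particular to those constructed in Theorem~\ref{thm:1}.

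For (3), let $M$ be a connected sum of finitely many copies of $S^1\times S^{m-1}$ with $m>1$ (and $m\neq 5$ for Theorem~\ref{thm:3}), and $f$ a normal spherical Morse function on $M$. Theorem~\ref{thm:3}~(\ref{thm:3.1}) gives a special generic map $g\colon M\to\mathbb{R}^2$ with ${\pi}_{2,1}\circ g=f$, and since $M$ has exactly the connected-sum form appearing in Proposition~\ref{prop:9}~(3), the ``if'' direction of that statement gives a codimension~$1$ embedding $h\colon M\to\mathbb{R}^{m+1}$ with ${\pi}_{m+1,2}\circ h=g$; then ${\pi}_{m+1,1}\circ h=f$, as desired. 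The only point that requires thought is the observation driving (2): the codimension~$1$ immersion already in hand is simultaneously a codimension~$1$ immersion lifting the auxiliary plane-valued map ${\pi}_{m+1,2}\circ h$, so the obstruction recorded in Proposition~\ref{prop:9}~(2) can be played against the hypothesis $m\in\{3,7\}$. Parts (1) and (3) are then essentially bookkeeping with the projection identity above, the only care needed being to respect the dimensional hypotheses $m>1$, $m\neq 5$ inherited from Theorem~\ref{thm:3} and to invoke Theorem~\ref{thm:1} for the remaining low-dimensional cases.
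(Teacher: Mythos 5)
Your proposal is correct and follows essentially the same route as the paper: part (1) from Theorem \ref{thm:1} together with Theorem \ref{thm:3} (\ref{thm:3.1}) and Proposition \ref{prop:9} (1), part (2) by playing the ``only if'' direction of Proposition \ref{prop:9} (2) against $m-2\in\{1,5\}$, and part (3) from Theorem \ref{thm:3} (\ref{thm:3.1}) with Proposition \ref{prop:9} (3), falling back on Theorem \ref{thm:1} for the residual dimensions. Your write-up is in fact more explicit than the paper's (which only cites the ingredients), and your handling of the low-dimensional and $m=5$ cases matches the paper's intent.
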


\begin{proof}
The first statement follows from Theorem \ref{thm:1}, the first statement of Theorem \ref{thm:3} and the first statement of Proposition \ref{prop:9}. The second statement follows from the second statement of Proposition \ref{prop:9}. The last
 statement follows from the first statement of Theorem \ref{thm:3} and the last statement of Proposition \ref{prop:9} in the case where $m \neq 5$ holds and from Theorem \ref{thm:1} in the case where $m=5$ holds.  
\end{proof}

From Proposition \ref{prop:9}, Corollary \ref{cor:1} and additional discussions, we have the following.
\begin{Thm}
\label{thm:6}
 Let $m=2,4,8$. Consider a special generic map obtained as a lift of a normal standard-spherical Morse function on a closed non-orientable manifold of dimension $m$ and an immersion lift in the second statement of Proposition \ref{prop:8}. Thus we obtain a codimension $1$ immersion lift of the Morse function and for a regular value $a$, there exists a connected component of the inverse image and
 the immersion of the component into ${{\pi}_{m+1,1}}^{-1}(a)$ is not regularly homotopic to the canonical embedding of the unit sphere. 
\end{Thm}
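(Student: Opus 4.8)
The plan is to produce the immersion lift by composing two lifts already available and then to extract from that construction a level sphere of $f$ that is immersed non-standardly. First I would fix a normal standard-spherical Morse function $f\colon M\to\mathbb{R}$ with $M$ non-orientable and $m\in\{2,4,8\}$. Since $m\neq 5$, Theorem \ref{thm:3} (\ref{thm:3.1}) lifts $f$ to a stable special generic map $g\colon M\to\mathbb{R}^{2}$ with ${\pi}_{2,1}\circ g=f$; this is the special generic map named in the statement. As $m-2\in\{0,2,6\}$, the second statement of Proposition \ref{prop:9} then supplies the codimension $1$ immersion lift $h\colon M\to\mathbb{R}^{m+1}$ of $g$, with ${\pi}_{m+1,2}\circ h=g$, referred to in the statement. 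Now ${\pi}_{m+1,1}\circ h={\pi}_{2,1}\circ{\pi}_{m+1,2}\circ h={\pi}_{2,1}\circ g=f$, so $h$ is a codimension $1$ immersion lift of $f$; and since $TM\oplus{\nu}_{h}$ is the trivial $(m+1)$-plane bundle over $M$, the normal line bundle ${\nu}_{h}$ satisfies $w_{1}({\nu}_{h})=w_{1}(M)\neq 0$.

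Next I would locate an eversion hidden in the construction of $h$. In the construction behind the second statement of Proposition \ref{prop:9}, the restriction of $h$ to the $S^{m-2}$-bundle over the Reeb space $W_{g}$ realizes the monodromy of that bundle by regular homotopies of codimension $1$ immersions of $S^{m-2}$ into the fibres ${\pi}_{m+1,2}^{-1}(y)\cong\mathbb{R}^{m-1}$, the bundle being the standard one with the canonical embedding of $S^{m-2}$ used near the singular set of $g$. Since $w_{1}(M)=w_{1}({\nu}_{h})$ is pulled back from $W_{g}$, some loop in $W_{g}$ carries an orientation-reversing monodromy on the fibre $S^{m-2}$, and $h$ realizes it by turning the canonical embedding of $S^{m-2}\subset\mathbb{R}^{m-1}$ inside out; this is possible precisely because $m-2\in\{0,2,6\}$, by Proposition \ref{prop:2}.

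Then I would select the regular value $a$ of $f$ so that, under the quotient map $q\colon M\to W_{g}$, a connected component $C$ of $f^{-1}(a)$, which is a standard sphere $S^{m-1}$, equals $q^{-1}(L_{a})$ for an arc $L_{a}\subset W_{g}$ that runs through the region carrying this orientation-reversing monodromy, with its two endpoints on $\partial W_{g}$ placed so that the framings of the two capping $D^{m-1}$'s induced from the trivialization of the normal bundle of the singular set disagree by that monodromy. For such a choice, $h|_{C}\colon C\to{\pi}_{m+1,1}^{-1}(a)\cong\mathbb{R}^{m}$ is, up to isotopy, the canonical embedding $i_{0}$ of $S^{m-1}$ with an equatorial $S^{m-2}$ turned inside out across the cylinder joining the two hemispheres, and I claim $h|_{C}$ is not regularly homotopic to $i_{0}$. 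For $m=2$ this is immediate: turning the equatorial $S^{0}$ inside out forces the two arcs of $h|_{C}$ lying over the interior of $L_{a}$ to cross an odd number of times, so $h|_{C}$ is an immersed circle in the plane with an odd number of double points; by Whitney's formula its rotation number is then even, hence different from $\pm 1$, so $h|_{C}$ is not regularly homotopic to $i_{0}$. For $m=4,8$, a regular homotopy from $h|_{C}$ to $i_{0}$ could be concatenated with the reverse of the eversion built into $h|_{C}$ to produce a regular homotopy everting $S^{m-1}$ in $\mathbb{R}^{m}$, which is impossible for $m-1\in\{3,7\}$ (equivalently, the associated Smale invariant, an element of ${\pi}_{3}(SO(4))$ respectively ${\pi}_{7}(SO(8))$, is nonzero). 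In either case $C$ is the component asserted by the theorem.

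The step I expect to be the main obstacle is the choice in the previous paragraph: extracting from the construction behind Proposition \ref{prop:9}, together with the way the hyperplanes ${\pi}_{m+1,1}^{-1}(a)$ cut the non-orientable Reeb surface $W_{g}$, that the orientation-reversing monodromy can be captured inside a single level sphere of $f$ — which requires keeping track of the co-orientation of the immersed fibre spheres carefully enough to be sure that the two capping discs of $C$ are glued by a genuine eversion and not by an invisible reparametrization. The companion point needed for $m=4,8$, namely that the canonical $S^{m-1}$ modified by an eversion of an equatorial $S^{m-2}$ is an immersion not regularly homotopic to $i_{0}$, is the other place where the argument has to be carried through rather than merely invoked.
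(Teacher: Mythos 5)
Your setup is fine: identifying ``Proposition 8'' in the statement as the Saeki--Takase result (Proposition \ref{prop:9} (2)), composing the two lifts to get $h$ with ${\pi}_{m+1,1}\circ h=f$, and noting $w_1({\nu}_h)=w_1(M)\neq 0$ are all correct, and your two non-evertibility inputs (Whitney's formula for $m=2$; the impossibility of everting $S^3\subset{\mathbb{R}}^4$ and $S^7\subset{\mathbb{R}}^8$ for $m=4,8$) are the right facts. But the central step of your plan --- ``I would select the regular value $a$ so that \ldots $C=q^{-1}(L_a)$ for an arc $L_a$ that runs through the region carrying this orientation-reversing monodromy, with its two endpoints on $\partial W_g$ placed so that the framings \ldots disagree'' --- is a genuine gap, and you flag it yourself. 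The Morse function $f$ is given, so the family of arcs $L_a=\bar{f}^{-1}(a)\subset W_g$ is completely determined by $f$ and $g$; you have no freedom to ``place'' endpoints or route an arc through the twisted part of the non-orientable surface $W_g$. An orientation-reversing loop in $W_g$ detecting $w_1$ has no reason to be visible inside any single vertical slice: it may traverse several edges of the Reeb graph of $f$, so the eversion you want could a priori be distributed over many level components and several critical levels, with each individual slice looking standard. Nothing in your proposal rules this out, and without it the argument does not close.

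The paper avoids the localization problem entirely by arguing in the contrapositive: assume that \emph{every} connected component of \emph{every} regular level is immersed regularly homotopically to the canonical embedding $i_0$. Because $S^{m-1}$ with $m-1=1,3,7$ cannot be everted (Smale), $i_0$ and $i_0$ precomposed with an orientation-reversing diffeomorphism lie in different regular homotopy classes, so the hypothesis pins down a coherent co-orientation of all level spheres; propagating this through the regular homotopies along the edges of the Reeb graph and across the critical levels orients $M$, contradicting non-orientability. This global aggregation is exactly the step your slice-by-slice construction is missing. If you want to keep your direct approach, you would need to replace the ``choose a good slice'' step by an argument of this aggregate type (e.g.\ showing that if all slices were standard one could trivialize ${\nu}_h$, hence orient $M$), at which point you have essentially reconstructed the paper's proof; alternatively you could try to prove that the twisted $1$-handle of $W_g$ can always be isotoped into Morse position with respect to ${\pi}_{2,1}$, but that would change the given $f$, which the statement does not permit.
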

\begin{proof}
Assume that for no connected component of the inverse image of each regular value $a$, the immersion of the component into ${{\pi}_{m+1,1}}^{-1}(a)$ is not regularly homotopic to the canonical embedding of the unit sphere. Thus, the assumption $m-1=1,3,7$ means that the source manifold is orientable by \cite{smale}. This contradicts the assumption that the source manifold is non-orientable. This completes the proof.
\end{proof}

Related to Proposition \ref{prop:9}, we obtain the following about the normal bundles of singular sets of special generic maps obtained as lifts
 of normal spherical Morse functions.

\begin{Thm}
\label{thm:7}
Let $m>n \geq 2$ be positive integers.
Assume that we can lift a normal spherical Morse function on a closed manifold of dimension $m$ to a special generic map whose Reeb space is a handlebody or a compact manifold obtained by attaching a finite number of $1$-handles to $D^n$ and smoothing. Then the following hold.
\begin{enumerate}
\item If $n \geq 3$ holds and the source manifold of the function is non-orientable, then
 the normal bundle of the singular set of the resulting special generic map is not orientable.
\item For $2 \leq n \leq 4$, if the source manifold of the function is orientable, then
 the normal bundle of the singular set of the resulting special generic map is trivial.
\end{enumerate}
\end{Thm}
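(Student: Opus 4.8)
The plan is to read off the normal bundle $\nu$ of the singular set $S(F)$ of the special generic lift $F\colon M\to\mathbb{R}^n$ from the structure described in Example~\ref{ex:2}(\ref{ex:2.2}) and then to control its characteristic classes using that the Reeb space is a handlebody. Write $W=W_F$ for the Reeb space, $q\colon M\to W$ for the quotient map and $\bar F\colon W\to\mathbb{R}^n$ for the immersion with $F=\bar F\circ q$. By hypothesis $W$ is a handlebody, or $D^n$ with finitely many $1$-handles attached and smoothed, so $W$ is homotopy equivalent to a wedge $\bigvee_j S^1$; in particular $H^{i}(W;A)=0$ for every $i\ge2$ and every coefficient group $A$. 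Since $\bar F$ immerses the $n$-manifold $W$ into $\mathbb{R}^n$ it is a local diffeomorphism on the interior, so $W$ is orientable and $w_1(TW)=0$. From Example~\ref{ex:2}(\ref{ex:2.2}): $q$ identifies $S(F)$ with $\partial W$; a tubular neighbourhood of $S(F)$ in $M$ is exactly the linear $D^{m-n+1}$-bundle over $\partial W$ appearing there, i.e.\ the disc bundle of $\nu$; and this is the restriction to $\partial W$ of the $(m-n+1)$-dimensional vector bundle $N$ over $W$ whose associated $S^{m-n}$-bundle is realised by $q$ over $\mathrm{int}\,W$, so that $\nu=N|_{\partial W}$.

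The computational input is the tangent-bundle identity $TM\oplus\epsilon^{1}\cong q^{*}(TW\oplus N)$: over $\mathrm{int}\,W$ this is the standard formula for the tangent bundle of a linear sphere bundle (vertical tangent bundle plus radial trivial line equals the pulled-back fibre), and over a collar of $\partial W$ it is the corresponding formula for the disc bundle of $\nu$; the two descriptions agree on the overlap. Taking first Stiefel--Whitney classes and using $w_1(TW)=0$ gives $w_1(TM)=q^{*}w_1(N)$. Since $q$ induces homology isomorphisms in degrees $\le m-n$ and $m-n\ge1$, the map $q^{*}\colon H^1(W;\mathbb{Z}/2)\to H^1(M;\mathbb{Z}/2)$ is an isomorphism, so $M$ is orientable if and only if $w_1(N)=0$ in $H^1(W;\mathbb{Z}/2)$; and since $W\simeq\bigvee_j S^1$, a vector bundle over $W$ is trivial precisely when its first Stiefel--Whitney class vanishes.

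For (1), let $n\ge3$ and $M$ be non-orientable, so $w_1(N)\ne0$. In the long exact sequence of the pair $(W,\partial W)$ with $\mathbb{Z}/2$-coefficients the kernel of $i^{*}\colon H^1(W;\mathbb{Z}/2)\to H^1(\partial W;\mathbb{Z}/2)$ is the image of $H^1(W,\partial W;\mathbb{Z}/2)\cong H_{n-1}(W;\mathbb{Z}/2)$, and the latter vanishes because $W\simeq\bigvee_j S^1$ and $n-1\ge2$; hence $i^{*}$ is injective and $w_1(\nu)=i^{*}w_1(N)\ne0$, i.e.\ $\nu$ is non-orientable. For (2), let $2\le n\le4$ and $M$ be orientable. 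Then $w_1(\nu)=i^{*}w_1(N)=0$, so $\nu$ is orientable, and since $\dim\partial W=n-1\le3$ and $\pi_2(SO(r))=0$ for all $r$, the only obstruction to trivialising $\nu$ beyond $w_1$ lies in degree $2$: it is $w_2(\nu)$ when $m>n+1$ (the Euler class when $m=n+1$), and this class is determined by the sphere bundle $S(\nu)=S(N)|_{\partial W}$, hence is the $i^{*}$-image of the corresponding class of the extension over $W$, which lies in $H^{2}(W;\,\cdot\,)=0$. Therefore $\nu$ is trivial. The bound $n\le4$ is essential here: for $n=5$ a surviving obstruction in $H^{4}(\partial W;\pi_3(SO(r)))$ would block the argument, as $\pi_3(SO(r))\ne0$ in general.

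The delicate point is the part of Example~\ref{ex:2}(\ref{ex:2.2}) that must be made precise, namely the linear-sphere-bundle description realising $\nu$ as the boundary restriction of a bundle over the whole handlebody $W$ with all lower-degree characteristic data pulled back from $H^{*}(W)$ — this is exactly the structure already underlying Proposition~\ref{prop:9}, and it is what forces the degree-$2$ obstruction in (2) to vanish; once it is in place both orientability computations follow immediately from $H^{\ge2}(W)=0$ and, for $n\ge3$, from the injectivity of $i^{*}$. Finally, Corollary~\ref{cor:1} and Proposition~\ref{prop:7} allow one to pass down the projections from $F\colon M\to\mathbb{R}^n$ to a normal spherical fold map into $\mathbb{R}^2$ whose Reeb space is a $2$-dimensional handlebody, so that the planar instances of these statements are precisely Proposition~\ref{prop:9} and the general case is the additional discussion sketched above.
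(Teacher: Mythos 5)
Your proposal is correct in substance and follows the same overall strategy as the paper: read off the normal bundle $\nu$ from the structure theorem for special generic maps (Example \ref{ex:2} (\ref{ex:2.2})), and kill its characteristic classes using that the Reeb space $W$ is a handlebody together with the homology isomorphisms induced by the quotient map. You are, however, more precise than the paper in two places. First, for part (1) the paper only gestures at why non-orientability of $M$ forces $w_1(\nu)\neq 0$; your Lefschetz-duality argument that $H^1(W;\mathbb{Z}/2)\to H^1(\partial W;\mathbb{Z}/2)$ is injective exactly when $n\geq 3$ (because $H^1(W,\partial W;\mathbb{Z}/2)\cong H_{n-1}(W;\mathbb{Z}/2)=0$) isolates the actual role of the hypothesis $n\geq 3$. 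Second, for part (2) the paper argues ``$w_1=w_2=w_3=0$ and the base has dimension $<4$, hence trivial,'' which silently skips the rank-$2$ case $m=n+1$, where the obstruction is the integral Euler class rather than $w_2$; your Euler-class remark closes that gap (and here the needed extension over $W$ is automatic, since every $S^1$-bundle is linear and $H^2(W;\mathbb{Z})=0$).

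The one soft spot is the object you yourself flag as delicate: the vector bundle $N$ over all of $W$ with $\nu=N|_{\partial W}$ and $TM\oplus\epsilon^1\cong q^*(TW\oplus N)$. The structure theorem only provides an $S^{m-n}$-bundle over $W$ that need not be linear away from a collar of $\partial W$, so $N$ does not exist for free. Fortunately every conclusion you extract from $N$ can be obtained without it: for the $w_1$ statements, use that $q^*:H^1(W;\mathbb{Z}/2)\to H^1(M;\mathbb{Z}/2)$ is an isomorphism to write $w_1(TM)=q^*\alpha$, restrict to the disc bundle $D(\nu)\subset M$ (where $TM|_{D(\nu)}\cong\pi^*(T\partial W\oplus\nu)$ and $w_1(T\partial W)=0$) to get $w_1(\nu)=i^*\alpha$, and then run your injectivity/vanishing arguments on $\alpha$; for $w_2(\nu)$ when $m-n\geq 2$, either use naturality of the degree-two obstruction of the spherical fibration $E\to W$ under restriction to $\partial W$ (no linearity needed), or do as the paper does and deduce $w_2(TM)=0$ from the Reeb graph of the original Morse function ($H_2(M)\cong H_2(\text{graph})=0$ for $m\geq 4$), then restrict to $D(\nu)$ and use $w_2(T\partial W)=0$ for $n\leq 4$. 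With that repair your argument is complete and, in the rank-$2$ case, strictly more careful than the paper's.
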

\begin{proof}
We prove the first part. The boundary of the handlebody is represented as a connected
 sum of a finite number of $S^1 \times S^{n-2}$. The 1st homology group of the
 handlebody whose coefficient ring is $\mathbb{Z}/2\mathbb{Z}$ is generated by the direct sum of a finite number of $H_1(S^1 \times \{0\};\mathbb{Z}/2\mathbb{Z})$, isomorphic to $\mathbb{Z}/2\mathbb{Z}$. The normal bundle of the singular set is a linear bundle over the boundary and the non-orientability of the source manifold and Example \ref{ex:2} (\ref{ex:2.2}) or (\ref{ex:2.3}) together with the assumption $m-1>m-n \geq 1$ for example, imply that the 1st Stiefel-Whitney class of the normal bundle does not vanish. \\
We prove the second part. The normal bundle must be orientable since the source manifold is orientable and the base space of the bundle is orientable. Moreover, the normal bundle is spin since by the following reasons, the 2nd Stiefel-Whitney classes vanish.
\begin{enumerate}
\item In the case where $n=2$ holds, the source manifold is as in Proposition \ref{prop:8} and spin.
\item In the case where $n=3,4$ holds, $m \geq 4$ holds and from Example \ref{ex:2} (\ref{ex:2.3}), $m-1 \geq 3$ holds and the 2nd homology groups vanish.
\end{enumerate}

Note that there may be other explanations. Moreover, by a fundamental fact on the Stiefel-Whitney classes of linear bundles, the 3rd Stiefel-Whitney class vanishes. Furthermore, the dimension of the base space is smaller than $4$. This means that the normal bundle is trivial.
\end{proof}

The following is a result on lifts of special generic maps to embeddings.

\begin{Prop}[\cite{nishioka2}]
\label{prop:10}
For a special generic map $f:M  \rightarrow {\mathbb{R}}^n$ of a closed orientable manifold $M$ of dimension $m>n$. If for an integer $k$, $k \geq \max\{\frac{3m+3}{2},m+n+1\}$ holds and the
normal bundle of the singular set of $f$ is trivial, then $f$ can be lifted to an embedding into ${\mathbb{R}}^k$.  
\end{Prop}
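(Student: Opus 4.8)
The plan is to reconstruct $f$ from its immersed Reeb space and then ``fatten'' that picture in the $k-n$ new coordinates while keeping the first $n$ coordinates equal to $f$ throughout, so that the result is automatically a lift. First I would recall the structure of a special generic map: the Reeb space $W_f=:P$ is a compact $n$-manifold with non-empty boundary, $\bar f\colon P\to\mathbb R^n$ is an immersion and $f=\bar f\circ q_f$ with $q_f\colon M\to P$ the quotient map. Since $\bar f$ is an immersion of an $n$-manifold into $\mathbb R^n$, $d\bar f$ identifies $TP$ with $\bar f^*T\mathbb R^n$, so $P$ is parallelizable. By the description recalled in Example~\ref{ex:2}~(\ref{ex:2.2}), $M$ is obtained from the total space of an $S^{m-n}$-bundle $S(\xi)$ over $P$, the unit sphere bundle of a rank-$(m-n+1)$ vector bundle $\xi$, and the total space of the linear disc bundle $D(\eta)$ of the normal bundle $\eta$ of the singular set $S(f)\cong\partial P$, glued along their common sphere bundle over $\partial P$. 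The hypothesis that $\eta$ is trivial forces $\xi|_{\partial P}$ to be trivial as well, and one can then arrange the identification so that $M=\partial D(\xi)$ for a rank-$(m-n+1)$ vector bundle $\xi\to P$ that is trivial near $\partial P$; on the $D(\eta)$-part, $q_f$ and $f$ are governed by the radial (height-function) coordinate dictated by Definition~\ref{def:1}.

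I would then build the embedding in two stages. In the first stage I lift $\bar f$ to an embedding $g=(\bar f,g_1)\colon P\hookrightarrow\mathbb R^n\times\mathbb R^{k-n}$ by choosing $g_1\colon P\to\mathbb R^{k-n}$ generically: the set of pairs $(p,p')$ with $p\neq p'$ and $\bar f(p)=\bar f(p')$ is compact (because $\bar f$ is a local diffeomorphism) and of dimension at most $n$, so as soon as $k-n>n$ a generic $g_1$ separates it and makes $g$ injective, hence an embedding; I would in addition arrange $g$ so that its tangent planes are everywhere transverse to the vertical subspace $\{0\}\times\mathbb R^{k-n}$. In the second stage I choose a rank-$(m-n+1)$ subbundle $\xi'\subset\underline{\mathbb R^{k-n}}$ over $P$ with $\xi'\cong\xi$; this is possible because any rank-$(m-n+1)$ bundle over the $n$-complex $P$ embeds into a trivial bundle of rank $(m-n+1)+n=m+1\le k-n$. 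I then define $F\colon D(\xi)\to\mathbb R^n\times\mathbb R^{k-n}$ by $(p,v)\mapsto(\bar f(p),\,g_1(p)+v)$ for $v\in\xi'_p$ of small norm, suitably modified near $\partial P$ (using that $\xi$ is trivial there) so that on the collar $F$ realizes the special generic local form of $f$ in its first $n$ coordinates. The transversality arranged above makes $F$ an immersion, and $F$ is injective for small enough fibre radius because $g$ is an embedding and the relevant self-intersection set of $\bar f$ is compact, so the vertical discs over distinct $q_f$-preimages of a point of $\mathbb R^n$ have uniformly separated centres in $\mathbb R^{k-n}$. Restricting $F$ to $M=\partial D(\xi)$ gives the desired lift $f_k$, and $\pi_{k,n}\circ f_k=\bar f\circ q_f=f$ by construction.

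It remains to locate where the dimension hypotheses enter and what the hard part is. The inequality $k\ge m+n+1$ makes both general-position steps succeed: $k-n\ge m+1$ gives the room to realize $\xi$ as a subbundle of the trivial vertical bundle, and $k-n>n$ gives the room to resolve the self-intersections of $\bar f$. The stronger inequality $k\ge\frac{3m+3}{2}$, which is the binding one when $m\ge 2n$, is Haefliger's metastable range; it is needed because $\bar f$ is the Reeb immersion of the \emph{given} map $f$ and need not be in general position, so resolving its full multiple-point set and choosing $\xi'$ together with the fibrewise disc embeddings coherently over $P$ requires a metastable-range argument rather than a crude dimension count. I expect the main obstacle to be exactly this coordination problem --- simultaneously keeping $F$ a lift of $f$ and an honest embedding across the self-intersection locus of $\bar f$ and across the corner $\partial P$ where $S(\xi)$ meets $D(\eta)$ --- and it is there that the hypothesis $k\ge\frac{3m+3}{2}$ is genuinely used. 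Orientability of $M$ enters as in the immersion lifts of Saeki--Takase recorded in Proposition~\ref{prop:9}, for instance to set up the normal-bundle data $\eta$ and $\xi$ coherently.
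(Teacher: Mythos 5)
First, a point of orientation: the paper does not prove Proposition \ref{prop:10} at all --- it is quoted from Nishioka's thesis \cite{nishioka2} --- so there is no in-paper argument to compare yours with. Judged on its own terms, your outline is the natural one and several steps are sound: the double-point set of the Reeb immersion $\bar f$ is indeed a compact $n$-dimensional set bounded away from the diagonal (no genericity of $\bar f$ needed, since an equidimensional immersion is a local diffeomorphism), so a generic $g_1\colon P\to{\mathbb{R}}^{k-n}$ with $k-n>n$ separates it; $k-n\geq m+1$ does let you place a rank-$(m-n+1)$ bundle over the $n$-complex $P$ inside the trivial vertical bundle; and the map $(p,v)\mapsto(\bar f(p),g_1(p)+v)$, suitably bent near $\partial P$ to follow the fold normal form, is an immersion and injective for small fibre radius for the reasons you give.

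The genuine gap is your very first structural step: you take the $S^{m-n}$-bundle over $P$ to be the unit sphere bundle of a vector bundle $\xi$ and write $M=\partial D(\xi)$. The structure theorem recalled in Example \ref{ex:2} (\ref{ex:2.2}) only provides a smooth $S^{m-n}$-bundle with structure group $\mathrm{Diff}(S^{m-n})$; it is linear near $\partial P$ (where it is the sphere bundle of the normal bundle $\eta$), but need not be linear over all of $P$. Already $\pi_0\mathrm{Diff}(S^{6})\cong\Theta_7$ is much larger than $\pi_0 O(7)$, so for instance $S^{6}$-bundles over a surface or a $3$-complex can fail to be linear, and such bundles do occur for special generic maps with $m-n=6$. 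Your embedding $g_1(p)+v$ with $v\in\xi'_p$ presupposes round affine fibres and collapses without linearity. This is not a cosmetic omission: it is exactly the point at which the hypothesis $k\geq\frac{3m+3}{2}$ (Haefliger's metastable range) must be invoked, to embed the non-linear fibre data coherently over $P$, and the tell-tale symptom is that your construction as written never uses that inequality --- it would ``prove'' the proposition assuming only $k\geq m+n+1$. You correctly sense that a metastable-range argument is needed for a ``coordination problem,'' but you neither locate where it enters nor supply it, so the hard part of the proof is missing. Two smaller points: the gluing of the sphere bundle over $\partial P$ to $S(\eta)$ is by some bundle isomorphism that must be absorbed into $\xi$ before one may write $M=\partial D(\xi)$; and on the $D(\eta)$-part the first $n$ coordinates of your map must be $\bar f$ composed with the collar/height parametrization rather than $\bar f(p)$ itself --- you acknowledge this but do not carry it out, and it is where the immersion condition at the fold points has to be checked.
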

By virtue of this together with Theorem \ref{thm:1} and Theorem \ref{thm:7}, we immediately have the following.
\begin{Thm}
\label{thm:8}
Every normal spherical Morse function on a closed orientable manifold of dimension $m \geq 2$ satisfying $m \neq 5$ can be lifted to an embedding into $k$-dimensional Euclidean space where $k \geq \max\{\frac{3m+3}{2},m+2+1\}$ holds so that the composition
 of the embedding and ${\pi}_{k,2}$ is a special generic map into ${\mathbb{R}}^2$ and that the composition of the special generic map and ${\pi}_{2,1}$ is the original Morse function.
Furthermore, if $m<7$ or $m \geq 7$ and the Gromoll filtration number of all the diffeomorphisms on $D^{m-1}$ fixing all the points in the boundary is larger than $n-1$ where $n=3,4$, then every normal standard-spherical Morse function on a closed orientable manifold of dimension $m>n$ can be lifted to an embedding into ${\mathbb{R}}^k$ so that
the composition
 of the embedding and ${\pi}_{k,n}$ is a special generic map into ${\mathbb{R}}^n$ and that the composition of the special generic map and ${\pi}_{n,1}$ is the original Morse function.
\end{Thm}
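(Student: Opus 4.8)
The plan is to derive Theorem \ref{thm:8} by chaining Theorem \ref{thm:3}, Theorem \ref{thm:7} and Proposition \ref{prop:10}; no genuinely new construction is needed, but one must be careful that the special generic map produced by Theorem \ref{thm:3} has a Reeb space of the restricted type required in Theorem \ref{thm:7}.

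First I would fix a normal spherical Morse function $f\colon M\to\mathbb{R}$ on a closed orientable $m$-manifold and distinguish the two cases of the statement: the first with $m\geq 2$, $m\neq 5$ and $n=2$; the second with $f$ also standard-spherical, $n\in\{3,4\}$, $m>n$, $m\neq 5$ (needed to invoke Theorem \ref{thm:3}\,(\ref{thm:3.2})), and either $m<7$ or the stated hypothesis on the Gromoll filtration numbers of the diffeomorphisms of $D^{m-1}$ fixing the boundary. By Theorem \ref{thm:3}\,(\ref{thm:3.1}) when $n=2$, and by Theorem \ref{thm:3}\,(\ref{thm:3.2}) when $n\in\{3,4\}$, the function $f$ lifts to a special generic map $g\colon M\to\mathbb{R}^n$, so $\pi_{n,1}\circ g=f$. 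The key point, read off from the \emph{construction} in the proof of Theorem \ref{thm:3} (the same attachment procedure that underlies Theorem \ref{thm:1}), is that $g$ may be chosen so that its Reeb space is $D^n$ with finitely many $1$-handles attached and smoothed: over each vertex of the graph $W_f$ the local Reeb piece is an $n$-disc (index $0$) or an $n$-disc with one $1$-handle (index $1$), over each edge it is a product with a closed interval, and assembling these along $W_f$ gives $D^n$ with one $1$-handle per independent cycle of $W_f$. In particular $g$ satisfies the hypothesis of Theorem \ref{thm:7}.

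Next I would invoke Theorem \ref{thm:7}\,(2): since $M$ is orientable and $2\leq n\leq 4$, the normal bundle of the singular set $S(g)$ is trivial. Then I would apply Proposition \ref{prop:10} to $g$, whose hypotheses are now all in place — $M$ closed orientable, $m>n$, normal bundle of $S(g)$ trivial — to lift $g$ to an embedding $e\colon M\to\mathbb{R}^k$ with $\pi_{k,n}\circ e=g$, for any $k\geq\max\{\tfrac{3m+3}{2},\,m+n+1\}$; for $n=2$ this is exactly the bound $\max\{\tfrac{3m+3}{2},\,m+2+1\}$ in the statement. Since $\pi_{k,n}\circ e=g$ is special generic into $\mathbb{R}^n$ and $\pi_{n,1}\circ g=f$, the embedding $e$ has the two required projection properties, completing the argument.

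The step I expect to be the main obstacle is the structural claim about the Reeb space: Theorem \ref{thm:7} does not apply to an arbitrary special generic lift of $f$, only to one whose Reeb space is a handlebody or a disc with $1$-handles, so one has to check that the particular lift coming from the attachment procedure has this form — equivalently, that its Reeb space is a regular neighborhood in $\mathbb{R}^n$ of the graph $W_f$ and deformation retracts onto it. A secondary point to settle is the boundary case $m=n$ (only $m=n=2$ is consistent with the hypotheses of the first assertion), which is outside the range $m>n$ of Proposition \ref{prop:10}; there the embedding with prescribed $\pi_{k,2}$-projection would have to come from a separate general-position argument, or one may simply restrict the first assertion to $m>2$.
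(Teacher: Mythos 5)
Your proposal is correct and follows essentially the same route as the paper, which derives the theorem immediately from Proposition \ref{prop:10} together with Theorem \ref{thm:7} and the special generic lifts of Theorem \ref{thm:3} (the paper's proof cites Theorem \ref{thm:1} at this point, but the intended input is clearly the lift to a special generic map). Your extra verification that the lift constructed in Theorem \ref{thm:3} has Reeb space a regular neighborhood of the graph $W_f$, i.e.\ $D^n$ with finitely many $1$-handles attached, is exactly the point the paper leaves implicit, and your flagging of the boundary case $m=n=2$ is a legitimate caveat about the stated hypotheses.
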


Moreover, as an example together with Example \ref{ex:6}, we have the following.
\begin{Cor}
Every normal spherical Morse function on a closed orientable manifold of dimension $13$ can be lifted to an embedding into $21$-dimensional Euclidean space so that the composition
 of the embedding and ${\pi}_{21,2}$ is a special generic map into ${\mathbb{R}}^2$ and that the composition of the special generic map and ${\pi}_{2,1}$ is the original Morse function.
Furthermore, for $n=3,4$, every normal spherical Morse function on a closed orientable manifold of dimension $13$ can be lifted to an embedding into ${\mathbb{R}}^{21}$ so that
the composition
 of the embedding and ${\pi}_{21,n}$ is a special generic map into ${\mathbb{R}}^n$ and that
 the composition of the special generic map and ${\pi}_{n,1}$ is the original Morse function.
\end{Cor}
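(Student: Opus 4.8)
The plan is to read this off directly from Theorem~\ref{thm:8} by taking $m=13$ and feeding in the numerical facts collected in Example~\ref{ex:6}. First I would do the arithmetic that fixes the target dimension: for $m=13$ one has $\tfrac{3m+3}{2}=21$ while $m+2+1=16$, so $\max\{\tfrac{3m+3}{2},\,m+2+1\}=21$, and for $n=3,4$ one has $m+n+1=17,18$, so $\max\{\tfrac{3m+3}{2},\,m+n+1\}=21$ as well. Hence the single value $k=21$ simultaneously satisfies the dimension bound of Theorem~\ref{thm:8} for $n=2,3,4$, which is what lets the Corollary be stated uniformly with target $\mathbb{R}^{21}$.

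Next I would check the hypotheses. The source manifold is orientable by assumption, $m=13\ge 2$, and $m=13\neq 5$, so the first assertion of Theorem~\ref{thm:8} applies as stated and produces the embedding into $\mathbb{R}^{21}$ whose composition with $\pi_{21,2}$ is a special generic map into $\mathbb{R}^2$ and whose composition with $\pi_{2,1}$ is the original Morse function. For the second assertion I must, because $m=13\ge 7$, verify that the Gromoll filtration number of every diffeomorphism of $D^{m-1}=D^{12}$ fixing $\partial D^{12}$ pointwise is larger than $n-1$; Example~\ref{ex:6} records that this number is always larger than $3$, which covers $n=4$ exactly and $n=3$ with room to spare, so the second assertion of Theorem~\ref{thm:8} applies for $n=3,4$ and again yields a lift into $\mathbb{R}^{21}$.

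The only point deserving comment is that the second assertion of Theorem~\ref{thm:8} is phrased for normal \emph{standard}-spherical Morse functions whereas the Corollary asks for all normal spherical ones. In dimension $13$ this is no restriction: the inverse images of regular values are $12$-dimensional homotopy spheres, and every homotopy $12$-sphere is diffeomorphic to the standard $S^{12}$ (the group of homotopy $12$-spheres being trivial), so ``normal spherical'' and ``normal standard-spherical'' coincide on a closed $13$-manifold --- this is precisely the remark closing Example~\ref{ex:6}. So there is no real obstacle here; the proof is a formality once Theorem~\ref{thm:8} is available, and the only thing one must not understate is that the bound ``larger than $3$'' for the Gromoll filtration number of diffeomorphisms of $D^{12}$ is used at full strength in the case $n=4$.
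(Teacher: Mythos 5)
Your proposal is correct and matches the paper's intent exactly: the paper offers no separate proof, presenting the Corollary as an immediate consequence of Theorem \ref{thm:8} combined with the numerical facts of Example \ref{ex:6}, which is precisely the arithmetic and hypothesis-checking you carry out (including the observation that $\Theta_{12}$ being trivial makes ``spherical'' and ``standard-spherical'' coincide for $13$-manifolds, and that a diffeomorphism of $D^{12}$ fixing the boundary pointwise is automatically orientation preserving, so the Gromoll filtration bound of Example \ref{ex:6} applies).
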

\begin{Rem}
\label{rem:3}
Related to the problems here, we end the present paper by introducing another explicit question. 

There are plenty of pairs of special generic maps into different target manifolds or a fixed target manifold which are not $C^{\infty}$ equivalent on a fixed closed manifold. We can see such facts in \cite{nishioka} together with \cite{saeki2} for example: Nishioka has constructed special generic maps on manifolds represented as connected sums of finite numbers of $S^2 \times S^3$ or total spaces of non-trivial linear $S^3$-bundles over $S^2$ into ${\mathbb{R}}^4$, where Saeki constructed such maps on these manifolds into ${\mathbb{R}}^3$. 

As another study, Yamamoto \cite{yamamoto} studied $C^{\infty}$ equivalence of special generic maps on homotopy spheres into the plane: for example, Yamamoto has explicitly shown that there exist homotopy spheres admitting two $C^{0}$ equivalent standard special generic maps into the plane being not $C^{\infty}$ equivalent. 

We explain more precisely about the study of Yamamoto. He has shown that on a homotopy sphere $\Sigma$ of dimension larger than $2$, there exist special generic maps $f_1:\Sigma \rightarrow {\mathbb{R}}^2$ and $f_2:\Sigma \rightarrow {\mathbb{R}}^2$ such that a homeomorphism $\Phi$ satisfying the relation $f_1=\Phi \circ f_2$ exists and that we cannot take $\Phi$ as a diffeomorphism. 
 $f_1$ and $f_2$ are lifts of a special generic function on $\Sigma$ and we can also obtain lifts as immersions or embeddings of $f_1$ and $f_2$ by virtue of the study of the present paper. However, we do not know the immersions or embeddings obtained as lifts are equivalent under appropriate relations such as $C^{\infty}$ equivalence, isotopy, regular homotopy etc..
\end{Rem}
Related to Remark \ref{rem:3}, as an appendix, we present an example of a pair of normal spherical Morse functions on a fixed manifold which are not {\it regularly equivalent}: Two such functions $f_1:M \rightarrow \mathbb{R}$ and $f_2:M \rightarrow \mathbb{R}$ are said to be {\it regularly equivalent} if there exists an diffeomorphism $\Phi$ on $M$ (smoothly) isotopic to the identity map on $M$ satisfying the relation $f_1=f_2 \circ \Phi$ and such equivalence classes are discussed in \cite{yamamoto}, which are essentially equivalent to $C^{\infty}$ equivalence classes in the case where we only consider special generic functions and standard special generic maps into the plane (whose singular value sets are embedded circles).

We consider a normal spherical Morse function which is not special generic and consider the Reeb space. We can change diffeomorphisms representing identifications at the inverse images of two points near points corresponding to the values of singular points whose indices are $0$. If the orientation preserving diffeomorphism group of the standard sphere appearing as the inverse is not connected and the dimension of the source manifold is larger than $6$, then we can construct a new function such that the original function and the new one is not regularly equivalent and if we take a diffeomorphism ${\Phi}_1$ arbitrary and the other diffeomorphism ${\Phi}_2$ suitably, then we obtain a new source manifold diffeomorphic to the original one. This procedure gives an operation to the source manifold as the following.
\begin{enumerate}
\item We take a connected sum of the original source manifold and a homotopy sphere.
\item We take a connected sum of the obtained source manifold and another homotopy sphere to obtain a source manifold diffeomorphic to the original one.
\end{enumerate}
To obtain a map such that the equivalence class is different from that of the original one, we take ${\Phi}_1$ as a diffeomorphism not smoothly isotopic to the original one and the diffeomorphism obtained by the composition of the original diffeomorphism and ${{\Phi}_1}^{-1}$ is orientation preserving. Note that the two maps are $C^{0}$ equivalent. 
\begin{figure}
\includegraphics[width=30mm]{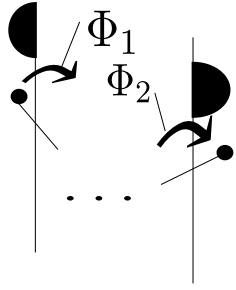}
\caption{We can change the diffeomorphisms ${\Phi}_1$ and ${\Phi}_2$ on the spheres appearing as inverse images to glue local maps together.}
\label{fig:8}
\end{figure}
See also FIGURE \ref{fig:8}.

Last, if we lift functions of such a pair as Theorem \ref{thm:3}, then we obtain special generic maps into the plane and in suitable cases, ones into higher dimensional spaces. If ${\Phi}_2$ is taken suitably, by considering smooth homotopies preserving the $C^{\infty}$ equivalence classes, then we obtain an identical special generic map as presented in FIGURE \ref{fig:9} where ${\Phi}_{1,2}$ is a diffeomorphism on the standard sphere of the inverse image obtained by using the original two diffeomorphisms canonically. More precisely, we take diffeomorphisms ${\Phi}_1$ and ${\Phi}_2$ so that (after the deformation by a homotopy) they are extensions of identity maps on embedded standard closed discs as mentioned in Proposition \ref{prop:5} etc. and obtain a new diffeomorphism ${\Phi}_{1,2}$ on a new standard sphere by gluing the original two diffeomorphisms on the complements of the identity maps canonically, and using a suitable isotopy on the obtained standard sphere. Note also that $\Phi_2$ is needed to be chosen suitably so that the $C^{\infty}$ equivalence classes of special generic maps considered are invariant and that the resulting special generic maps through the lifts and deformations by homotopies are identical and as in FIGURE \ref{fig:9}. 
\begin{figure}
\includegraphics[width=30mm]{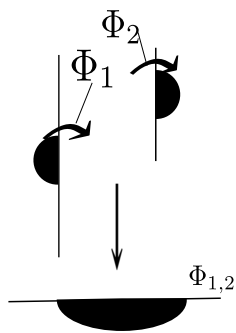}
\caption{A deformation of a special generic map obtained as a lift of a normal spherical Morse function by a homotopy preserving the $C^{\infty}$ equivalence class.}
\label{fig:9}
\end{figure}
We present this observation as a theorem.
\begin{Thm}
For any normal spherical Morse function on a closed manifold of dimension $m>6$ which is not special generic and satisfying the condition that the orientation preserving diffeomorphism group of $S^{m-1}$ is not connected, then there exists another normal spherical Morse function on the manifold such that the original function and this function is not regularly equivalent, that they are $C^{0}$ equivalent and that the obtained lifts being special generic maps into the plane as Theorem \ref{thm:3} can be $C^{\infty}$ equivalent. In addition, for an integer $n$ satisfying the relation $m \geq n \geq 2$, if for any orientation preserving diffeomorphism of $D^{m-1}$ fixing all the points on the boundary, the Gromoll filtration number is larger than $n-1$, then the obtained lifts being special generic maps into ${\mathbb{R}}^n$ as Theorem \ref{thm:3} can be $C^{\infty}$ equivalent. 
\end{Thm}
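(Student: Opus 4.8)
The plan is to realize the modification sketched above at the level of Reeb graphs and the standard local models, and then to check the three asserted properties in turn. Write $W_f$ for the Reeb graph of $f$: its valence-$1$ vertices are the index-$0$ singular points and its interior (valence-$3$) vertices the index-$1$ ones; over a small closed arc at a valence-$1$ vertex the inverse image is the standard disc $D^m$ with its radial Morse function, attached to the rest of $M$ along $\partial D^m=S^{m-1}$ by a diffeomorphism, and over an interior arc it is $S^{m-1}\times[0,1]$. Since $M$ is closed and connected, $f$ has a minimum and a maximum, so $W_f$ has two distinct index-$0$ vertices $v_1\neq v_2$. Because $m>6$, Cerf's theorem gives $\pi_0(\mathrm{Diff}^{+}(S^{m-1}))\cong\Theta_m$, the group of homotopy $m$-spheres; as this group is assumed disconnected there is an orientation preserving diffeomorphism $\psi$ of $S^{m-1}$ not smoothly isotopic to the identity, whose associated homotopy sphere $\Sigma_\psi=D^m\cup_\psi D^m$ is exotic, and there is an orientation preserving $\psi'$ with $\Sigma_{\psi'}$ the inverse of $\Sigma_\psi$ in $\Theta_m$. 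I would then define $g$ by replacing the attaching diffeomorphism of the disc handle at $v_1$ by its composition with $\psi$ and the one at $v_2$ by its composition with $\psi'$, leaving all other gluings fixed. Normality being a condition on the inverse images of small arcs, which are unchanged up to $C^{\infty}$ equivalence, $g$ is again a normal spherical Morse function with Reeb graph $W_f$; and since regluing along a two-sided $S^{m-1}$ by a diffeomorphism $\chi$ changes the diffeomorphism type by connected sum with $\Sigma_\chi$, the source of $g$ is $M\#\Sigma_\psi\#\Sigma_{\psi'}\cong M$, and I regard $g$ as a normal spherical Morse function on $M$.

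For the $C^{0}$ equivalence I would use that $m-1\geq 6$, so every homotopy $(m-1)$-sphere is homeomorphic to $S^{m-1}$ and $\psi,\psi'$ are topologically isotopic to the identity; inserting such topological isotopies into the collars used in the regluings yields a homeomorphism of the sources carrying $f$ to $g$ and commuting with the projection to $\mathbb{R}$, so $f$ and $g$ are $C^{0}$ equivalent. For the failure of regular equivalence, suppose $\Phi$ is a diffeomorphism of $M$ smoothly isotopic to the identity with $f=g\circ\Phi$. Being isotopic to the identity, $\Phi$ preserves the handle decomposition coming from $W_f$ up to isotopy, hence may be taken to cover $\mathrm{id}_{W_f}$, to carry the disc handles at $v_1,v_2$ to those of $g$, and to restrict on level-set spheres to diffeomorphisms of $S^{m-1}$; comparing the attaching data at $v_1$ and $v_2$ then expresses $[\psi]$ and $[\psi']\in\pi_0(\mathrm{Diff}^{+}(S^{m-1}))$ in terms of the classes of those restrictions, which are constrained precisely because they come from a single $\Phi\simeq\mathrm{id}_M$, forcing $\Sigma_\psi$ to be standard --- a contradiction. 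This is the device used by Yamamoto \cite{yamamoto} to separate regular equivalence classes of special generic and standard-spherical functions, and pinning down exactly why $\Phi\simeq\mathrm{id}_M$ kills the twist by $\psi$ is the first point needing care.

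Finally I would lift $f$ and $g$ to special generic maps $F,G$ into the plane (resp. into $\mathbb{R}^n$) as in Theorem \ref{thm:3}, lifting each disc handle to a cobordism of standard special generic maps, each interior arc to the product of a special generic function on $S^{m-1}$ with the identity, and gluing along level-set spheres. By Cerf's theory in the form of Proposition \ref{prop:4} for the planar target --- and of Propositions \ref{prop:5} and \ref{prop:6} for the target $\mathbb{R}^n$, where the assumption that the Gromoll filtration number of every diffeomorphism of $D^{m-1}$ fixing the boundary exceeds $n-1$ is exactly the needed input --- the attaching diffeomorphisms $\phi_1\circ\psi$ and $\phi_2\circ\psi'$ of $g$ may be chosen, after a homotopy of the lift through special generic maps preserving the $C^{\infty}$ equivalence class, to be extensions by the identity of diffeomorphisms supported off embedded discs as in Proposition \ref{prop:5}. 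One then slides the twist by $\psi$ at $v_1$ along an arc of $W_f$ to $v_2$, combines it there with $\psi'$ into a single diffeomorphism $\Phi_{1,2}$ on a level-set sphere, and absorbs $\Phi_{1,2}$ by a further isotopy, so that $G$ becomes literally the lift of $f$ built with the corresponding choices; this is the deformation pictured in Figure \ref{fig:9}. Hence $F$ and $G$ can be taken $C^{\infty}$ equivalent, and the same slide applies verbatim over $\mathbb{R}^n$ under the Gromoll hypothesis.

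I expect this last step --- organizing the homotopies and isotopies of Figure \ref{fig:9} so that the $C^{\infty}$ equivalence class is visibly preserved throughout, and doing so uniformly in $n$ using Propositions \ref{prop:5} and \ref{prop:6} --- to be the main obstacle, with the non-regular-equivalence argument of the second paragraph the other delicate point; the remaining assertions are routine once the local models and the connected-sum description of the regluing are in hand.
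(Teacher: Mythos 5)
Your proposal follows essentially the same route as the paper's (informal) argument: retwist the attaching diffeomorphisms at two index-$0$ (disc-handle) vertices of the Reeb graph by $\Phi_1$ not isotopic to the identity and a compensating $\Phi_2$ inverse to it in $\Theta_m$, deduce non-regular-equivalence and $C^0$ equivalence from Cerf's identification $\pi_0(\mathrm{Diff}^{+}(S^{m-1}))\cong\Theta_m$, and then slide and merge the two twists into a single $\Phi_{1,2}$ absorbed by an isotopy of the lift, using Propositions \ref{prop:4}, \ref{prop:5} and \ref{prop:6} exactly as the paper does for the planar and $\mathbb{R}^n$ targets. The delicate points you flag (why $\Phi\simeq\mathrm{id}_M$ kills the twist, and the bookkeeping of Figure \ref{fig:9}) are precisely the ones the paper also leaves at the level of a sketch, so your write-up is, if anything, more explicit than the original.
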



\begin{thebibliography}{25}
\bibitem{blankcurley} S. J. Blank and C. Curley, \textsl{Desingularizing maps of corank one}, Proc. Amer. Math. Soc. 80 (1980), 483--486. 
\bibitem{cerf} J. Cerf, \textsl{Sur les diffeomorphismes de la sph\`{e}re de dimension trois (${\Gamma}_4=0$)}, Lecture Notes in Math., Vol. 53, Springer-Verlag, 1968.
\bibitem{crowleyschick} D. Crowley and T. Schick, \textsl{The Gromoll filtration, KO-chracteristic classes and metrics of positive scalar curvature}, Geometry  Topoogy 17 (3) (2013), 1773--1789, arxiv:1204.6474v4.
\bibitem{eliashberg} Y. Eliashberg, \textsl{On singularities of folding type}, Math. USSR Izv. 4 (1970). 1119--1134.
\bibitem{eliashberg2} Y. Eliashberg, \textsl{Surgery of singularities of smooth mappings}, Math. USSR Izv. 6 (1972). 1302--1326.
\bibitem{golubitskyguillemin} M. Golubitsky and V. Guillemin, \textsl{Stable Mappings and Their Singularities}, Graduate Texts in Mathematics (14), Springer-Verlag(1974).
\bibitem{haefliger} A. Haefliger, \textsl{Quelques remarques sur les applications diff\'{e}rentiables d'une surface dans le plan}, Ann. Inst Fourier, Grenoble 10 (1960), 47--60. 
\bibitem{kaiser} U. Kaiser, \textsl{Immersions in codimension $1$
 up to regular homotopy}, Arch. Math. (Basel) 51 (1988), 371--377.
\bibitem{levine} H. Levine, \textsl{Classifying immersions into ${\mathbb{R}}^4$ over stable maps of $3$-manifolds into ${\mathbb{R}}^2$}, Lecture Notes in Math., Vol. 1157,
 Springer-Verlag, Berlin,  1985. 
\bibitem{kitazawa} N. Kitazawa, \textsl{On round fold maps} (in Japanese), RIMS K\^{o}ky\^{u}roku Bessatsu B38 (2013), 45--59.
\bibitem{kitazawa2} N. Kitazawa, \textsl{Fold maps with singular value sets of concentric spheres}, Hokkaido Mathematical Journal, Vol. 43, No. 3 (2014), 327--359.
\bibitem{nishioka} M. Nishioka, \textsl{Special generic maps of $5$-dimensional manifolds}, Revue Roumaine de Math\`{e}matiques Pures et Appliqu\`{e}es, Volume LX No.4 (2015), 507--517.
\bibitem{nishioka2} M. Nishioka, \textsl{Desingularizing special generic maps into $3$-dimensional space}, PhD Thesis (Kyushu Univ.), arxiv:1603.04520v1.
\bibitem{reeb} G. Reeb, \textsl{Sur les points singuliers d\'{}une forme de Pfaff compl\'{e}tement int\`{e}grable ou d\'{}une fonction num\'{e}rique}, Comptes Rendus
 Hebdomadaires des S\'{e}ances de I\'{}Acad\'{e}mie des Sciences 222 (1946), 847--849.
\bibitem{sadykov} R. Sadykov, \textsl{Bordism groups of special generic mappings}, Proc. Amer. Math. Soc., 133 (2005), 931--936.
\bibitem{saito} Y. Saito, \textsl{On decomposable mappings of manifolds}, J. Math. Kyoto Univ. 1 (1961/1962), 425--455.
\bibitem{saeki} O. Saeki, \textsl{Notes on the topology of folds}, J.Math.Soc.Japan Volume 44, Number 3 (1992), 551--566.
\bibitem{saeki2} O. Saeki, \textsl{Topology of special generic maps of manifolds into Euclidean spaces}, Topology Appl. 49 (1993), 265--293.
\bibitem{saeki3} O. Saeki, \textsl{Cobordism groups of special generics functions and groups of homotopy spheres}, Japan. J. Math. (N. S. ) 28 (2002), 287--297.
\bibitem{saeki4} O. Saeki, \textsl{Topology of singular fibers of differentiable maps}, Lecture Notes in Mathematics 1854, Springer-Verlag, Berlin, 2004, 
\bibitem{saekisuzuoka} O. Saeki and K. Suzuoka, \textsl{Generic smooth maps with sphere fibers} J. Math. Soc. Japan Volume 57, Number 3 (2005), 881--902.
\bibitem{saekitakase} O. Saeki and M. Takase, \textsl{Desingularizing special generic maps}, Journal of Gokova Geometry Topology 7 (2013), 1--24.
\bibitem{shiota} M. Shiota, \textsl{Thom's conjecture on triangulations of maps}, Topology 39 (2000), 383--399. 
\bibitem{smale} S. Smale, \textsl{The Classification of Immersions of Spheres in Euclidean Spaces}, Ann. of Math. 69 (2) (1959). 627--644.
\bibitem{wrazidlo} D. J. Wrazidlo, \textsl{Standard special generic maps of homotopy spheres into Eucidean spaces}, Topology Appl. 234 (2018), 348--358, arxiv:1707.08646.
\bibitem{yamamoto} M. Yamamoto, \textsl{Regular equivalence of special generic maps of manifolds into the plane}, Mathematical Proceedings of the Cambridge Philosophical Society 136 (2004), 465--483.
\bibitem{yamamoto2} M. Yamamoto, \textsl{Lifting a generic map of a surface into the plane to an embedding into $4$-space}, Illinois J. Math. 51 (2007)., 705--721.
\bibitem{yamamoto3} M. Yamamoto, \textsl{On embedding lifts over a Morse function on a circle}, Singularity theory, geometry and topology, 31-43, RIMS Kokyuroku Bessatsu B38, Res. 
Inst. Math. Sci. (RIMS), Kyoto, 2013.
\end{thebibliography}
\end{document}